\documentclass[submission,copyright,creativecommons]{eptcs}
\providecommand{\event}{ACT 2026} % Name of the event you are submitting to

\usepackage{iftex}

\ifpdf
  \usepackage{underscore}         % Only needed if you use pdflatex.
  \usepackage[T1]{fontenc}        % Recommended with pdflatex
\else
  \usepackage{breakurl}           % Not needed if you use pdflatex only.
\fi

\usepackage{graphicx} % Required for inserting images
\usepackage{amsthm}
\usepackage{amsmath}
\usepackage{amssymb}
\usepackage{stmaryrd}
\usepackage{xcolor}
\usepackage{tcolorbox}
\usepackage{tabularx}
\usepackage{wrapfig}
\usepackage{physics}
\usepackage{bbold}
\theoremstyle{plain}
\newtheorem{theorem}{Theorem}
\newtheorem{lemma}[theorem]{Lemma}
\newtheorem{corollary}[theorem]{Corollary}
\newtheorem{proposition}[theorem]{Proposition}

\theoremstyle{definition}
\newtheorem{definition}[theorem]{Definition}
\newtheorem{example}[theorem]{Example}
\theoremstyle{remark}
\newtheorem{remark}[theorem]{Remark}
\usepackage{enumerate}
\usepackage{tikz}
\usetikzlibrary{cd, automata, positioning, arrows, calc}
\usepackage{macros}

\title{Combinatorial manifolds and Kleene's theorem, homotopically}
\author{Yorgo Chamoun
\institute{LIX, Ecole polytechnique\\ Palaiseau, France}
\email{yorgo.chamoun@polytechnique.edu}
}
\def\titlerunning{Combinatorial manifolds and Kleene's theorem}
\def\authorrunning{Y. Chamoun}
\begin{document}
\maketitle

\begin{abstract}
We give a general method to build categories of combinatorial manifolds, i.e. categories of combinatorial objects satisfying some local property at every ``point'', as coreflective subcategories of categories of relational presheaves. To do this, we crucially rely on unique factorization systems, and we can interpet our technique as a way of building a model category whose cofibrant objects are exactly the combinatorial manifolds. We then illustrate the usefulness of this point of view by two applications. First we build a category of euclidean precubical sets, i.e. precubical sets that locally look like a grid (of some fixed dimension), and show that it is coreflective in the category of relational precubical sets. This is the combinatorial analog of eulidean locally ordered spaces and the blowup construction from directed topology. Secondly, we show how to give an abstract proof of Kleene's theorem from automata theory by defining ``manifold automata'' that behave well with respect to concatenation.
\end{abstract}

\section{Introduction}
Relational presheaves~\cite{rosenthal1991free} over some category $\ccat$ are lax functors $\ccat^{op}\to\Rel$ with $\Rel$ the category of sets and relations. We can interpret these objects as ``open'' combinatorial structures. For example, if $\ccat$ is the simplex category $\Delta$, for every $n\in\Delta$, there is a relational simplicial set $S_n$ such that $S_n(n)=\{*\}$ and $S_n(m)=\varnothing$ for $m\ne n$, which can be interpreted as the open $n$-simplex. This topological interpretation has been confirmed by recent work on their realization~\cite{chamoun2026realization}. Thus, using relational presheaves, we are able to define ``combinatorial manifolds'', i.e.~combinatorial objects which are defined by a local property, where local means ``on some open neighborhood of every point''. Since we are in the combinatorial world, and moreover in a very well-behaved category (the category of relational presheaves over any category is locally finitely presentable), we can expect this definition to be given by an elegant categorical formalism. This is indeed the case, and the goal of this paper is to show that this can be done via unique factorization systems. In some cases, we can even define a model structure where the ``manifolds'' are exactly the cofibrant objects, even though this point of view does not seem to be fundamentally richer, since the other factorization system of the model structure is trivial. This is developped in \S\ref{sec:general}, after a quick overview of relational presheaves. We then give two applications of this methodology. 

\paragraph{Euclidean precubical sets.}
A \emph{precubical set} is a presheaf on the cube category $\square$, and can be thought of as a collection of $n$-cubes for every dimension $n$ glued together on their faces. Allowing a precubical set to be relational just means that we allow a cube to have any number of faces (including zero) in every direction. Precubical sets are prototypical higher directed combinatorial objects, and can be used to model parallel programs in computer science for example, where an $n$-cube represents $n$ operations executed in parallel~\cite{fajstrup2016directed}. In this context, the subclass of \emph{euclidean} precubical sets, i.e.~precubical sets which are locally isomorphic to a (filled) $n$-grid for some fixed dimension $n$, is particularly interesting. From the computer scientist perspective, the programs represented by euclidean precubical sets are locally just $n$ independent parallel processes, and are thus easy to optimize with respect to their execution time, e.g.~using tools from  differential geometry~\cite{haucourt2025non}.

\[\begin{tikzpicture}
    \fill[color=mygray] (0,0) rectangle (1,1);
    \draw[thick] (0.5,0) -- (0.5,1);
    \draw[thick] (0,0.5) -- (1,0.5);
    \filldraw (0.5,0.5) circle (.05);
	\draw (0.5,-0.5) node{\tiny A relational euclidean precubical set};
	\draw (0.5,-0.7) node{\tiny of dimension 2.};

	\begin{scope}[xshift=40mm]
		\filldraw (0.5,0) circle (.05);
		\draw (0.5,0.5) circle (0.5);
		\draw (0.5,-0.5) node{\tiny A euclidean precubical set};
		\draw (0.5,-0.7) node{\tiny of dimension 1.};
	\end{scope}

	\begin{scope}[xshift=80mm]
		\draw (-0.25,0.5) edge[thick] (0.5,0.5);
		\filldraw (0.5,0.5) circle (.05);
		\draw (0.5,0.5) edge[thick] (1,1);
		\draw (0.5,0.5) edge[thick] (1,0);
		\draw (0.5,-0.5) node{\tiny A relational precubical set};
		\draw (0.5,-0.7) node{\tiny of dimension 1 which is not euclidean.};
	\end{scope}
  \end{tikzpicture}\]

\noindent In \S\ref{sec:pcsets}, motivated by this fact, we define a model structure on the category of relational precubical sets, and we show that its cofibrant replacement corresponds to 
a combinatorial \emph{blowup} in the vein of~\cite{haucourt2025non,chamoun2026realization}
but with a better universal property. In other words, we give an abstract definition of euclidean precubical sets, which comes with a canonical way of approximating a precubical set by a euclidean one. This example is a good illustration of our techniques, since the manifold interpretation is explicit.

\paragraph{Relational automata.}
An \emph{automaton} over some alphabet $\Sigma$ is a (possibly infinite) $\Sigma$-labelled graph with distinguished initial and accepting vertices. Vertices are called \emph{states} and edges are called \emph{transitions}. A \emph{word} $w=w_1 w_2\dots w_n$, i.e. a finite list of letters, is \emph{recognized} by an automaton $A$ if there is a $w$-labelled \emph{path} in $A$ from an initial to an accepting state, i.e. if there is a morphism  
\[\odot\hspace{-0.8ex}\xrightarrow{w_1}\hspace{-0.9ex}\circ\hspace{-0.9ex}\xrightarrow{w_2} \cdots\xrightarrow{w_n}\hspace{-0.9ex}\otimes \;\to \;A\]

\begin{wrapfigure}{r}{0.2\textwidth}
	\vspace{-5.5ex}
\[\hspace{-4.5ex}\begin{tikzpicture}[scale=0.45]
    \node (A) at (0,0) {$\odot$};
    
    % Nœud avec symbole ⊗
    \node (B) at (0,0) {$\otimes$};
    
    % Boucle sur le nœud A avec étiquette a
    \draw[->] (A) to [out=140, in=40, looseness=50, distance=3cm] node[midway, above] {a} (A);
	\draw[->] (A) to [out=-140, in=-40, looseness=50, distance=3cm] node[midway, below] {b} (A);
	\draw (0,-4) node{(A)};

	\begin{scope}[yshift=-100mm]
% Nœud avec symbole ⊙
    \node (A) at (-2,0) {$\odot$};
    
    % Nœud avec symbole ⊗
    \node (B) at (2,0) {$\otimes$};
    
    % Boucle sur le nœud A avec étiquette a
    \draw[->] (A) to [out=140, in=40, looseness=50, distance=3cm] node[midway, above] {a} (A);
    
    % Boucle sur le nœud B avec étiquette b
    \draw[->] (B) to [out=140, in=40, looseness=50, distance=3cm] node[midway, above] {b} (B);
    
    % Flèche epsilon entre A et B
    \draw[->] (A) -- node[above] {$\varepsilon$} (B);
	\draw (0,-1.5) node{(B)};
	\end{scope}
  \end{tikzpicture}
  \vspace{-3ex}\]
\end{wrapfigure}
\noindent (where the dotted state is initial and the crossed state is accepting).
The \emph{language} recognized by an automaton $A$ is the set of words recognized by $A$. \emph{Kleene's theorem}~\cite{sakarovitch2009elements,sipser1996introduction} states that the languages recognized by \emph{finite} automata are exactly the \emph{regular languages}, i.e.~the languages generated by the empty language and the singleton languages $\{a\}$ for every $a\in\Sigma$ under  union, concatenation ($L_1 * L_2:=\{w_1*w_2\,|\,w_1\in L_1,w_2\in L_2\}$, where concatenation of words is the ususal concatenation of finite lists) and Kleene star ($L^*:=\bigcup_{n\in\N}L^n$). To prove that every regular language is recognized by some finite automaton, one procedes by induction on the constructors of these languages. In particular, one should prove that given two automata recognizing languages $L_1$ and $L_2$ respectively, they can be concatenated into an automaton recognizing $L_1 * L_2$. This is not obvious, and the standard proof uses \emph{automata with silent transitions}, i.e.~transitions labelled by the empty word $\epsilon$, and shows that these are equivalent to ordinary automata. For example, consider the languages $\{a\}^*$ and $\{b\}^*$ recognized by the automata with a unique initial and accepting state and a loop labelled by $a$ (respectively $b$). If we naively concatenate the automata by glueing the accepting state $v$ of the first and the initial state $v'$ of the second, we get the automaton $(A)$ recognizing $\{a,b\}^*\ne\{a\}^* * \{b\}^*$. What we should do instead is adding an $\epsilon$-transition between $v$ and $v'$ $(B)$, and then ``eliminate'' the $\epsilon$. So automata with $\epsilon$-transition can be seen as a technical tool to handle concatenation of automata.
We suggest in this paper that \emph{relational automata} are a (categorically) better such technical tool.
Indeed, in \S\ref{sec:automata}, looking at the category of relational automata, we abstractly define the automata which behave well with respect to composition. This is a local property, since we only need to look at the neighborhood of initial and accepting states. As an application, we give a simple abstract proof of the corresponding direction of Kleene's theorem.

\section{Theoretical preliminaries}\label{sec:general}

\subsection{Relational presheaves}\label{sec:preliminaries}
\begin{definition}
  \label{relpsh}
  A \emph{relational presheaf} $P$ over some category $\ccat$ is a lax functor $\ccat^{op}\to \Rel$.
  A \emph{morphism} $\alpha:P\Rightarrow Q$ of relational presheaves over $\ccat$ is an oplax natural transformation.
\end{definition}
Relational presheaves over $\ccat$ form a category noted $\RelPsh(\ccat)$. In concrete terms, a relational presheaf~$P$ amounts to the data of a family of sets $P(c)$ indexed by objects $c\in\ccat$ and a family of relations $P(f)$ indexed by morphisms~$f$, with $P(f)\subseteq P(c)\times P(d)$ for $f:d\to c$, subject to (writing $a\to _f b$ for $(a,b)\in P(f)$):
\begin{enumerate}
	\item (reflexivity) $x\to_{\id_c} x$ for every $x\in P(c)$;
	\item (transitivity) if $a\to _f b$ and $b\to _g c$ then $a\to _{g\circ f} c$ for composable $f$ and $g$.
\end{enumerate}

Similarly, a morphism $\alpha:P\Rightarrow  Q$ amounts to a family of \emph{functions} $(\alpha_c:P(c)\to Q(c))_{c\in\ccat}$ such that $a\to _f b$ implies $\alpha_c(a)\to_f\alpha_d(b)$ for every $f:d\to c$. $\RelPsh(\ccat)$ is locally finitely presentable. The inclusion $\Set\to\Rel$ induces a functor $\Psh(\ccat)\to\RelPsh(\ccat)$, which has both left and right adjoints~\cite{chamoun2026realization}.

\begin{wrapfigure}{2}{0.1\textwidth}
	%\vspace{-2ex}
	\[
	\begin{tikzpicture}
		\draw[->] (0,0) -- (1,0);
		\filldraw (1.1,0.1) circle (0.05);
		\filldraw (1.1,-0.1) circle (0.05);
	\end{tikzpicture}
	\]
	\vspace{-5ex}
\end{wrapfigure}
\vspace{-2ex}
~ 

\begin{example}\label{ex:graphs}
	The category of directed graphs is a presheaf topos $\Psh(\gcat)$ where $\gcat$ is the category $0\rightrightarrows 1$ with two objects and two parallel morphisms. For a graph $G$, $G(0)$ is the set of \emph{vertices} and $G(1)$ is the set of \emph{edges}. A relational graph is a relational presheaf over $\gcat$. Intuitively, a relational graph is a graph where the edges may have zero, one or multiple sources and targets. For example, in the relational graph on the right, the unique edge has no source and two targets.
\end{example}

\subsection{The model structure}
In this section, we fix a category $\ccat$, 
which is supposed to be locally finitely presentable. If $I$ is a set of morphisms of $\ccat$, 
the class of morphisms of $\ccat$ with the right (resp.~left) lifting property with respect to $I$ 
is noted $I^\boxslash$ (resp.~$^\boxslash I$). 
We will prefer the point of view of model categories over that of factorization systems, essentially for vocabulary reasons. 
Recall that a model structure on $\ccat$ is a triple $(W,C,F)$ of classes of morphisms, respectively called \emph{weak equivalences}, \emph{cofibrations} and \emph{fibrations}, subject to some conditions~\cite[Definition 1.1.3]{hovey2007model}. Intuitively, $W$ is a class of morphisms that we want to invert, and $C$ and $F$ ensure that it goes well. In particular, $(W\cap C,F)$ and $(C,W\cap F)$ are (weak) factorization systems. Recall that we can build \emph{cofibrantly generated} model structures on $\ccat$,
in the following way:
\begin{theorem}[Theorem 2.1.19 of~\cite{hovey2007model}]
	Suppose given a subcategory $W$ and two sets $I$ and $J$ of morphisms of $\ccat$ 
	satisfying the following:
	\begin{enumerate}
		\item $W$ is closed under retract and satisfies the 2-out-of-3 property;
		\item $I^\boxslash\subseteq J^\boxslash \cap W$;
		\item $^\boxslash(J^\boxslash)\subseteq \,^\boxslash(I^\boxslash)\cap W$;
		\item (2) or (3) is an equality.
	\end{enumerate}
	Then there is a cofibrantly generated model structure on $\ccat$ 
	whose cofibrations are given by $^\boxslash(I^\boxslash)$, 
	fibrations by $J^\boxslash$ and weak eauivalences by $W$.
\end{theorem}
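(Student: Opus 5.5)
This is Hovey's recognition theorem, and I would prove it with the small object argument. Since $\ccat$ is locally finitely presentable, every object is small, so the small object argument applies to any set of maps. For $K\in\{I,J\}$ it factors every morphism as a relative $K$-cell complex followed by a map in $K^\boxslash$. Every relative $K$-cell complex lies in $^\boxslash(K^\boxslash)$, and by the retract argument $^\boxslash(K^\boxslash)$ is exactly the class of retracts of relative $K$-cell complexes.

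First I would set $C:={}^\boxslash(I^\boxslash)$ and $F:=J^\boxslash$. I would then identify the two mixed classes:
\[
C\cap W={}^\boxslash(J^\boxslash) \qquad\text{and}\qquad F\cap W=I^\boxslash .
\]
Once these hold, the lifting axioms are immediate. Maps in $C$ lift against $I^\boxslash$, and maps in $^\boxslash(J^\boxslash)$ lift against $J^\boxslash$. The factorization axioms come from the two small object arguments. Retract closure of $C$ and $F$ is automatic because they are defined by lifting properties, retract closure of $W$ and 2-out-of-3 are hypothesis (1), and limits and colimits exist by local presentability.

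To prove the identifications, suppose first that (2) is an equality, $I^\boxslash=J^\boxslash\cap W$, so $F\cap W=I^\boxslash$ directly. For the other, hypothesis (3) gives $^\boxslash(J^\boxslash)\subseteq C\cap W$. Conversely, take $f\in C\cap W$ and factor it as $f=pi$ with $i\in{}^\boxslash(J^\boxslash)$ and $p\in J^\boxslash$. Then $i\in W$ by (3), so $p\in W$ by 2-out-of-3, and hence $p\in J^\boxslash\cap W=I^\boxslash$. Since $f\in C$, $f$ lifts against $p$, which exhibits $f$ as a retract of $i$. Therefore $f\in{}^\boxslash(J^\boxslash)$.

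The case where (3) is an equality, $^\boxslash(J^\boxslash)=C\cap W$, is dual. By (2), $I^\boxslash\subseteq F\cap W$. Conversely, take $p\in F\cap W$ and factor it as $p=qi$ with $i\in C$ and $q\in I^\boxslash$. Then $q\in W$ by (2), so $i\in W$ by 2-out-of-3, and hence $i\in C\cap W={}^\boxslash(J^\boxslash)$. Thus $i$ lifts against $p$, which makes $p$ a retract of $q$, so $p\in I^\boxslash$.

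I expect no real obstacle. The only delicate step is the retract argument at the end of each case. It needs 2-out-of-3 applied correctly to the factorization, and it needs the smallness hypotheses behind the small object argument, which local finite presentability supplies.
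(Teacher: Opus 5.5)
Your proof is correct. It is essentially Hovey's own proof of Theorem 2.1.19, which the paper simply cites rather than reproving: you identify the trivial cofibrations and trivial fibrations with the classes determined by $J$ and $I$, using one small-object factorization, 2-out-of-3 and the retract argument, and the smallness hypotheses of Hovey's version come from the standing assumption that the category is locally finitely presentable (the small object argument also supplies the functorial factorizations Hovey's definition requires).
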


In this case, $I$ is the set of \emph{generating cofibrations}, 
and $J$ is the set of \emph{generating trivial cofibrations}. Now notice that in the precedent theorem, if $I$ is such that $I^\boxslash$ satisfies 2-out-of-3,
then we can just take $W:=I^\boxslash$ and $J:=\varnothing$, 
and we automatically get a model structure where all morphisms are fibrations.
Now we need an easy way to find such sets.
The idea is to look at sets $I$ such that the lifting property defining $I^\boxslash$ is in fact a \emph{unique} lifting property.

\begin{definition}
	Let $f:c\xrightarrow{}d$ be a morphism. 
	We define the \emph{codiagonal} $\nabla_f:d\sqcup_c d\xrightarrow{}d$ (where $d\sqcup_c d$ is the coequalizer of $(f,f):c\to d\sqcup d$) of $f$ to be the morphism $(\id_d,\id_d)$.
	
	% We form the following pushout:
	% \begin{center}
	% 	\begin{tikzcd}
	% 	c\arrow[r, "f"]\arrow[d, "f"'] & d\arrow[d]\\
	% 	d \arrow[r] & d\sqcup_c d
	% 	\end{tikzcd}
	% \end{center}
	% We write $\nabla_f:d\sqcup_c d\xrightarrow{}d$ for the the \emph{codiagonal} $(\id_d,\id_d)$ of $f$.
\end{definition}

\noindent The idea of adding codiagonals in order to force unique lifting properties goes back to~\cite{gabriel1971lokal}. The following lemma is folklore (see~\cite[\S4.5]{bousfield1977constructions} for example), and is proven in Appendix~\ref{sec:proofs}.

\begin{lemma}\label{lem:unique_lift}
	Let $i$, $f$ be morphisms. $f$ has the unique right lifting property with respect to $i$
	if and only if $f$ has the right lifting property with respect to $i$ and $\nabla_i$.
\end{lemma}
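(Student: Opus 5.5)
The plan is to prove both directions directly from the universal property of the pushout $d\sqcup_c d$. Here $i:c\to d$ is the left map and $f:x\to y$ the right map. Write $\iota_1,\iota_2:d\to d\sqcup_c d$ for the two coprojections, which satisfy $\iota_1 i=\iota_2 i$. The codiagonal is the unique map with $\nabla_i\iota_1=\nabla_i\iota_2=\id_d$.

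($\Rightarrow$) Suppose $f$ has the unique right lifting property with respect to $i$. The lifting property for $i$ is then immediate. For $\nabla_i$, take a commutative square
\[u:d\sqcup_c d\to x,\qquad v:d\to y,\qquad f u=v\nabla_i .\]
Consider the two maps $u\iota_1,u\iota_2:d\to x$. Both restrict along $i$ to the same map $u\iota_1 i=u\iota_2 i$. Both also satisfy $f u\iota_k=v\nabla_i\iota_k=v$. So they are two lifts in the same square for $i$, namely the square with top $u\iota_1 i$ and bottom $v$. By uniqueness, $u\iota_1=u\iota_2=:\ell$. Then $\ell\nabla_i$ agrees with $u$ on both coprojections, so $\ell\nabla_i=u$ by the uniqueness part of the pushout property. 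Also $f\ell=v$, so $\ell$ is the required lift.

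($\Leftarrow$) Existence of lifts against $i$ is given. For uniqueness, let $\ell_1,\ell_2:d\to x$ be two lifts of a square $(a:c\to x,\ b:d\to y)$ against $i$. Since $\ell_1 i=a=\ell_2 i$, the pushout gives a map $u=(\ell_1,\ell_2):d\sqcup_c d\to x$. We check that $f u=b\nabla_i$ by testing on the coprojections: $fu\iota_k=f\ell_k=b=b\nabla_i\iota_k$. Lifting against $\nabla_i$ then gives $m:d\to x$ with $m\nabla_i=u$. Hence $\ell_1=u\iota_1=m\nabla_i\iota_1=m$, and similarly $\ell_2=m$, so $\ell_1=\ell_2$.

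Nothing here is a genuine obstacle; the argument only uses the pushout's universal property. The one point that needs care is reading $d\sqcup_c d$ correctly. The paper describes it as a coequalizer of $(f,f):c\to d\sqcup d$, but what the argument requires is the pushout of $i$ along itself, equivalently the coequalizer of the two composites $\iota_1 i,\iota_2 i:c\to d\sqcup d$. So the first step is to fix this reading. After that, the key step is the uniqueness clause of the pushout, used in both directions: in ($\Rightarrow$) to conclude $\ell\nabla_i=u$, and in ($\Leftarrow$) to build $u$ and verify the commuting square.
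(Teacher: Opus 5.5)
Your proof is correct and follows essentially the paper's argument. In the forward direction you use uniqueness of lifts against $i$ to identify $u\iota_1$ and $u\iota_2$; the paper instead takes the filler of the composite square and shows it equals both restrictions, which is the same step. In the converse you assemble two lifts into a map $d\sqcup_c d\to x$ and lift it against $\nabla_i$, exactly as the paper does, and your reading of $d\sqcup_c d$ as the pushout of $i$ along itself is the intended one.
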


\begin{remark}\label{rem:nabla}
	A lift with respect to a codiagonal $\nabla_i$ is always unique,
	because $\nabla_{\nabla_i}$ is an isomorpism.
\end{remark}

\begin{definition}
	A set $I$ of morphisms is \emph{stable by codiagonal} if 
	for every $f\in I$, $\nabla_f$ is in $^\boxslash(I^\boxslash)$. 
	It is \emph{of cofibrant domain} if 
	for every $f\in I$, the unique morphism $0\xrightarrow{}\dom(f)$ from the initial object is in $^\boxslash(I^\boxslash)$. 
\end{definition}

\begin{proposition}\label{prop:2of3}
	Let $I$ be a set of morphisms which is stable by codiagonal and of cofibrant domain.
	Then $I^\boxslash$ satisfies 2-out-of-3.
\end{proposition}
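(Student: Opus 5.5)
The plan is to reduce everything to \emph{unique} lifting, using Lemma~\ref{lem:unique_lift}. Since $I$ is stable by codiagonal, for every $i\in I$ we have $\nabla_i\in{}^\boxslash(I^\boxslash)$. Hence every $p\in I^\boxslash$ has the right lifting property with respect to both $i$ and $\nabla_i$. By Lemma~\ref{lem:unique_lift}, $p$ then has the \emph{unique} right lifting property with respect to every $i\in I$. I then check the three cases of 2-out-of-3 for composable $f:X\to Y$ and $g:Y\to Z$.

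\emph{Composition.} If $f,g\in I^\boxslash$, then $g\circ f\in I^\boxslash$ by the standard argument: lift against $g$ first, then against $f$. This holds for any class defined by a right lifting property.

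\emph{$g\circ f$ and $g$ in $I^\boxslash$ imply $f\in I^\boxslash$.} Take $i:A\to B$ in $I$ and a square $u:A\to X$, $v:B\to Y$ with $f u=v i$. Composing with $g$ gives a square for $g\circ f$, with top $u$ and bottom $g v$. So there is $h:B\to X$ with $h i=u$ and $g f h=g v$. It remains to show $f h=v$. Both $f h$ and $v$ are lifts in the square for $g$ with top $f u:A\to Y$ and bottom $g v:B\to Z$: indeed $(f h) i=f u=v i$, and $g(f h)=g v$. By uniqueness of lifts against $g$, we get $f h=v$, so $h$ is the required lift. This case only uses stability by codiagonal.

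\emph{$g\circ f$ and $f$ in $I^\boxslash$ imply $g\in I^\boxslash$.} This is where cofibrant domain is used, and it is the only slightly delicate step. Take $i:A\to B$ in $I$ and a square $u:A\to Y$, $v:B\to Z$ with $g u=v i$. The problem is that $u$ lands in $Y$ rather than $X$, so we must first lift $u$ through $f$. Since $0\to A$ is in ${}^\boxslash(I^\boxslash)$ and $f\in I^\boxslash$, the square formed by $0\to X$ and $u:A\to Y$ has a lift $u':A\to X$ with $f u'=u$. Now $(g f)u'=v i$, so lifting against $g\circ f$ gives $h:B\to X$ with $h i=u'$ and $g f h=v$. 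Then $f h:B\to Y$ satisfies $(f h)i=f u'=u$ and $g(f h)=v$, so it is the desired lift.

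Together these give 2-out-of-3 for $I^\boxslash$. I expect no real obstacle beyond this third case: one must notice that cofibrant domain is exactly what allows factoring the top map through $f$. Uniqueness is not even needed there; it is needed only in the second case.
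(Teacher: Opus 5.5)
Your proof is correct and follows the paper's argument. Uniqueness of lifts against $g$ settles the case where $g\circ f$ and $g$ lie in $I^\boxslash$, and cofibrant domain lets you lift the top map through $f$ when $g\circ f$ and $f$ do. The only difference is bookkeeping. The paper first enlarges $I$ by the $\nabla_i$ and the maps $0\to\dom(i)$, and so must also check that $0\to\dom(\nabla_i)$ is a cofibration. You instead note that membership in $I^\boxslash$ only needs lifts against the original $I$, with uniqueness used only for the hypothesis maps, so that extra check is not needed.
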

\noindent\textit{Proof.} Since $(^\boxslash(I^\boxslash))^\boxslash=I^\boxslash$, we can suppose that $f\in I$ implies $\nabla_f\in I$ and $0\xrightarrow{}\dom(f)\in I$.

\begin{wrapfigure}{r}{0.2\textwidth}
  \vspace{-5mm}
  \[
  \begin{tikzcd}
		a \arrow[r, ""]\arrow[d, "i"']        & c \arrow[d, "g"]\\
		b \arrow[r, "t"] \arrow[ur, "h", dotted] & d \arrow[d, "f"]\\
		                                      & e
  \end{tikzcd}
  \]
  
  \[
  \begin{tikzcd}
		                                          & c \arrow[d, "g"]\\
		a \arrow[r, ""]\arrow[d, "i"']\arrow[ur, "h_1", dotted] & d \arrow[d, "f"]\\
		b \arrow[r, "t"] \arrow[uur, "h_2"', dotted]  & e
		\end{tikzcd}
  \]
  
\end{wrapfigure}
By Lemma~\ref{lem:unique_lift}, this means that $I^\boxslash$ is the class of morphisms which have the unique right lifting property with respect to $I$. Now take two composable morphisms in $\ccat$, $g:c\xrightarrow{}d$ and $f:d\xrightarrow{}e$, and a morphism $i$ of $I$. Suppose that $f\circ g$ is in $I^\boxslash$.  If $f$ is in $I^\boxslash$, we can consider a diagram as on the right. $h$ comes from the fact that $f\circ g\in I^\boxslash$. We have to check that $g\circ h = t$,  knowing that $f\circ g \circ h = f\circ t$, but this just comes from the uniqueness of the lift with respect to $f$. Since we can do the same replacing $i$ with $\nabla_i$, this lift is unique.

Now suppose that $g\in I^\boxslash$, and consider a diagram as on the right. $h_1$ comes from the fact that $0\xrightarrow{}a$ is in $I$, and $h_2$ from the fact that $f\circ g\in I^\boxslash$. Then the lift is given by $g\circ h_2$. It would be unique by the same argument as before, if we could prove that $0\xrightarrow{}\dom(\nabla_i)$ is in $^\boxslash(I^\boxslash)$. But this is true because it is the composite
\[0\xrightarrow{} a \xrightarrow{i} b \xrightarrow{} b\sqcup_a b\]
where the first and the second morphism are in $^\boxslash(I^\boxslash)$ by hypothesis, the third is in $^\boxslash(I^\boxslash)$ because it is the pushout of $i$ (along itself) which is in $I$.\qed

\begin{corollary}\label{cor:quillen}
	Let $I$ be a set of morphisms which is stable by codiagonal and of cofibrant domain.
	Then there is a Quillen model structure on $\ccat$ for which every morphism is fibrant,
	whose class of cofibrations is $^\boxslash(I^\boxslash)$, 
	and whose class of weak equivalences is $I^\boxslash$.
\end{corollary}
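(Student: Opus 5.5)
The plan is to apply the cofibrantly generated recognition theorem (Theorem 2.1.19 of~\cite{hovey2007model}, recalled above) with $W:=I^\boxslash$, generating cofibrations $I$, and $J:=\varnothing$. With these choices $J^\boxslash$ is the class of all morphisms, so once the hypotheses are checked the theorem yields a model structure with cofibrations $^\boxslash(I^\boxslash)$, fibrations all morphisms, and weak equivalences $I^\boxslash$. In particular every map $x\to 1$ is a fibration, so every object is fibrant. Since $\ccat$ is locally finitely presentable, every object is small, so the smallness requirements implicit in Hovey's theorem (permitting the small object argument on $I$ and $J$) are automatic.

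The conditions are checked in order. Condition (1) asks that $W$ be closed under retracts and satisfy 2-out-of-3. Closure under retracts holds for any class of the form $I^\boxslash$, by the usual argument on lifting properties. The 2-out-of-3 property is exactly Proposition~\ref{prop:2of3}, applied using the hypotheses that $I$ is stable by codiagonal and of cofibrant domain; this is the only nontrivial input. One should also note that $W$ is a subcategory: $I^\boxslash$ contains identities and is closed under composition.

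Conditions (2)--(4) are then bookkeeping. For (2), $J^\boxslash\cap W$ is all morphisms intersected with $I^\boxslash$, so (2) holds with equality, and this equality gives (4). For (3), a map lies in $^\boxslash(J^\boxslash)$ exactly when it lifts against every morphism, in particular against itself; this forces it to be an isomorphism. Isomorphisms belong both to $^\boxslash(I^\boxslash)$ and to $W=I^\boxslash$, so (3) holds.

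I do not expect a real obstacle, since Proposition~\ref{prop:2of3} carries the weight. The only point needing care is the degenerate choice $J=\varnothing$: the trivial cofibrations are then the isomorphisms, and one must confirm that the theorem as stated allows an empty $J$. It does, since the resulting factorization $(\mathrm{iso},\mathrm{all})$ is trivially available.
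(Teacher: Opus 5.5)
Your proposal is correct and follows the paper's route: the paper also takes $W:=I^\boxslash$ and $J:=\varnothing$ in the recognition theorem, with Proposition~\ref{prop:2of3} supplying the only nontrivial hypothesis (2-out-of-3). Your checks of the remaining conditions (closure under retracts, equality in (2), $^\boxslash(J^\boxslash)$ being exactly the isomorphisms, and smallness from local presentability) spell out what the paper leaves implicit.
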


\noindent We allow ourselves to use the terminology ``cofibration'' for ``element of $^\boxslash(I^\boxslash)$'', even before proving that we have a model structure. Now we want to study the cofibrant replacement for this model structure. The following proposition is essentially in~\cite{gabriel1971lokal}. We do the proof in Appendix~\ref{sec:proofs}.

\begin{proposition}\label{prop:stable}
	Let $I$ be stable by codiagonal.
	Then $^\boxslash(I^\boxslash)$ is stable by codiagonal.
\end{proposition}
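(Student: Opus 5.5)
We need to show: if $f\in{}^\boxslash(I^\boxslash)$, then $\nabla_f\in{}^\boxslash(I^\boxslash)$. Equivalently, every $p\in I^\boxslash$ has the right lifting property against $\nabla_f$. The plan is to turn lifting against $\nabla_f$ into a uniqueness statement and then prove that uniqueness by a closure argument.

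\textbf{Step 1: reduce to uniqueness of lifts.} By Lemma~\ref{lem:unique_lift}, $p$ lifts against $\nabla_f$ exactly when $p$ has unique lifts against $f$, given that it already lifts against $f$. Here it does, since $f\in{}^\boxslash(I^\boxslash)$ and $p\in I^\boxslash$. Applying the same lemma to $I$, stability by codiagonal gives that every $p\in I^\boxslash$ has the \emph{unique} right lifting property against each $i\in I$. So it suffices to show:
\begin{quote}
the class $\mathcal{U}_p$ of morphisms against which a fixed $p\in I^\boxslash$ has the unique right lifting property contains ${}^\boxslash(I^\boxslash)$.
\end{quote}

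\textbf{Step 2: closure properties of $\mathcal{U}_p$.} We know $I\subseteq\mathcal{U}_p$. I would check directly that $\mathcal{U}_p$ is closed under the following operations.
\begin{itemize}
\item \emph{Pushouts.} A lifting problem against a pushout of $i$ corresponds bijectively to a lifting problem against $i$, by the universal property of the pushout.
\item \emph{Coproducts.} Same argument, using the universal property of the coproduct.
\item \emph{Transfinite compositions.} Build the lift stage by stage. Uniqueness at each successor stage makes the choices coherent, so they glue at limit stages by the colimit property; uniqueness of the glued lift follows since any two lifts agree at every stage.
\item \emph{Retracts.} Existence restricts through the retraction, and uniqueness follows by precomposing with the section.
\end{itemize}
Each of these is a routine diagram chase.

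\textbf{Step 3: small object argument.} Since $\ccat$ is locally presentable, the small object argument applies. It shows that ${}^\boxslash(I^\boxslash)$ is exactly the class of retracts of transfinite composites of pushouts of coproducts of maps in $I$. By Step 2, this class lies in $\mathcal{U}_p$, which concludes the proof.

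\textbf{Alternative route.} One could instead show directly that $\nabla$ preserves pushouts, coproducts, transfinite composites and retracts. For example, the codiagonal of a pushout of $i$ is a pushout of $\nabla_i$. This seems messier, so I prefer the lifting reformulation above.

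\textbf{Main obstacle.} I expect the hardest part to be the transfinite-composition case in Step 2, and in particular the coherence of the lifts at limit ordinals. Uniqueness is exactly what guarantees that the lifts chosen at successive stages are compatible.
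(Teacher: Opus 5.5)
Your proof is correct, but it follows a different route from the paper. The ``alternative route'' you set aside as messier is exactly what the paper does.

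The paper stays on the left side throughout. Using the small object argument, it writes $f$ as a retract of an $I$-cell complex. It then checks that the codiagonal construction itself commutes with each cell operation:
\begin{itemize}
\item the codiagonal of a coproduct is the coproduct of the codiagonals;
\item the codiagonal of a pushout of $i_1$ is a pushout of $\nabla_{i_1}$;
\item $\nabla_{i_2\circ i_1}$ factors as a pushout of $\nabla_{i_1}$ followed by $\nabla_{i_2}$, which needs an auxiliary cocartesian square;
\item the codiagonal of a retract is a retract of the codiagonal.
\end{itemize}
The paper never invokes Lemma~\ref{lem:unique_lift} here.

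You instead pass to the right side. Via Lemma~\ref{lem:unique_lift}, used once for each $i\in I$ and once for $f$, the problem becomes showing that ${}^\boxslash(I^\boxslash)$ is contained in the class ${}^{\perp}p$ of maps left orthogonal to each $p\in I^\boxslash$. That follows from the standard, purely formal closure properties of left-orthogonal classes.

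Your version buys economy and generality:
\begin{itemize}
\item There are no codiagonals to compute, so the composite case disappears into the usual transfinite-composition argument.
\item Arbitrary transfinite composites are handled uniformly. The paper's argument as written uses countable composites, which suffices when the domains of $I$ are finitely presentable, as in its applications.
\item Along the way you prove that every cofibration has the unique left lifting property against $I^\boxslash$. That is the first claim of Corollary~\ref{cor:cofib}, which the paper derives from this proposition.
\end{itemize}

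The paper's version gives a slightly more structural conclusion. It exhibits $\nabla_f$ itself as a retract of a cell complex built from the maps $\nabla_i$, $i\in I$. Hence $\nabla_f$ lies in the saturation of $\{\nabla_i\}_{i\in I}$, a statement that makes no reference to any lifting argument.
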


\begin{corollary}\label{cor:cofib}
	In the model structure of Corollary~\ref{cor:quillen}, 
	every cofibration has the unique left lifting property with respect to trivial fibrations
	(equivalently, weak equivalence).
	In particular, the cofibrant replacement is unique up to (unique) isomorphism,
	and a morphism is a weak equivalence if and only if it induces an isomorphism of the cofibrant replacements.
\end{corollary}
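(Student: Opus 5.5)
The plan is to combine Proposition~\ref{prop:stable} with Lemma~\ref{lem:unique_lift}. In the model structure of Corollary~\ref{cor:quillen}, the trivial fibrations are $I^\boxslash\cap(\text{all morphisms}) = I^\boxslash$, and this is exactly the class of weak equivalences. Now let $c$ be a cofibration, i.e.\ $c\in{}^\boxslash(I^\boxslash)$, and let $w\in I^\boxslash$. By Proposition~\ref{prop:stable}, $\nabla_c\in{}^\boxslash(I^\boxslash)$, so $w$ has the right lifting property with respect to both $c$ and $\nabla_c$. Lemma~\ref{lem:unique_lift} then says that $w$ has the \emph{unique} right lifting property with respect to $c$. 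This proves the first claim.

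For uniqueness of the cofibrant replacement, suppose $q:QX\to X$ and $q':Q'X\to X$ are two cofibrant replacements, that is, weak equivalences (equivalently trivial fibrations) with cofibrant domain. Consider the square with $0\to QX$ on the left, $q'$ on the right, $0\to Q'X$ on top and $q$ on the bottom. Since $0\to QX$ is a cofibration and $q'$ is a trivial fibration, the first claim gives a unique $u:QX\to Q'X$ with $q'u=q$. Symmetrically there is a unique $v:Q'X\to QX$ with $qv=q'$. Then $vu$ and $\id_{QX}$ both solve the lifting problem of $0\to QX$ against $q$, so uniqueness forces $vu=\id$; likewise $uv=\id$. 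Hence $u$ is the unique isomorphism over $X$.

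For the final characterization, take $f:X\to Y$ and fix cofibrant replacements $q_X$ and $q_Y$. Lifting $0\to QX$ against $q_Y$ along $f q_X$ gives a unique $Qf$ with $q_Y\,Qf=f\,q_X$. If $f$ is a weak equivalence, then $f q_X$ is also a weak equivalence by 2-out-of-3, and it has cofibrant domain, so it is a cofibrant replacement of $Y$. The uniqueness just proved then shows that $Qf$ is an isomorphism. Conversely, if $Qf$ is an isomorphism, then $q_Y\,Qf$ is a weak equivalence, since isomorphisms lie in $I^\boxslash$. So $f q_X$ is a weak equivalence, and 2-out-of-3 (Proposition~\ref{prop:2of3}) gives that $f$ is one.

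The only delicate point is the first one: that $\nabla_c$ is again a cofibration when $c$ is an arbitrary cofibration rather than a generator. This is exactly the content of Proposition~\ref{prop:stable}. Everything after it is formal.
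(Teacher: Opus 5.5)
Your proof is correct and follows the argument the paper intends. The paper gives no written proof: it states the corollary right after Proposition~\ref{prop:stable}, leaving implicit both the combination of that proposition with Lemma~\ref{lem:unique_lift} and the formal consequences for cofibrant replacements, and your uniqueness argument and your 2-out-of-3 verification of the weak-equivalence criterion supply exactly these omitted steps.
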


\begin{remark}\label{rem:cofib}
	Since the lift of cofibrations with respect to trivial fibrations is unique,
	it is easy to see that if $g\circ f$ and $f$ are cofibrations, 
	then $g$ is also a cofibration.
	In particular, any morphism between cofibrant objects is a cofibration.
\end{remark}

Our strategy to build categories of combinatorial manifolds is thus the following: identify a set of monomorphisms $I_+$, called \emph{positively generating cofibrations}, corresponding to the ways of locally building our combinatorial manifolds. Then define $I:=I_+\cup\{\nabla_i\,|\,i\in I_+\}$, and apply the construction of Corollary~\ref{cor:quillen}, to get a model structure with unique lifting property. This gives a coreflective subcategory, namely the category of cofibrant objects, which will exactly correspond to the expected manifolds. 

\begin{remark}
The model structure we obtain is not very interesting homotopically: it is one-dimensional in the sense of~\cite{balchin2019bousfield}, and all morphisms are fibrations. However, the conjunction of these two weaknesses is in fact interesting, since we can use the results in~\cite{balchin2019bousfield} to do a localization in a relatively easy way, by moreover keeping the same cofibrations, to get more interesting fibrations and trivial equivalences. 
%An example is shown below.
\end{remark}

\section{Precubical sets}\label{sec:pcsets}
As a proof of concept, we now show how to define a combinatorial analog of euclidean local orders, \textit{a.k.a} order manifolds. In this section, $\ccat$ is the category of relational precubical sets $\RelPsh(\square)$, as defined in~\cite{chamoun2026realization}. Recall that $\square$ is the free monoidal category with unit $0$ generated by $\gcat$ (Example~\ref{ex:graphs}). More concretely, the objects are natural numbers, and the morphisms are generated by $d^\eta_{n,i}:n\to n+1$ with $\eta\in\{-,+\}$, $n,i\in\N$ with $0\leq i\leq n$, subject to:
\[d^{\eta'}_{n+1,j}\circ d^\eta_{n,i}=d^\eta_{n+1,i}\circ d^{\eta'}_{n,j-1}\] 
for $0\leq i<j\leq n+1$. For a precubical set $P$, $P(n)$ is the set of \emph{$n$-dimensional cubes}, and the $P(f)$ are the \emph{face maps} (or \emph{face relations} in the relational case), where $\delta^\eta_{n,i}:=P(d^\eta_{n,i})$ maps an $n$-cube to its back ($\eta=-$) or front ($\eta=+$) face in direction $i$. Precubical sets are equipped with a tensor product defined by $P\otimes Q(n):=\bigsqcup_{i+j=n}P_i\times D_j$, with obvious face maps. More details can be found in~\cite[\S3.4.1]{fajstrup2016directed}.

\subsection{The model structure}
Fix a natural number $n$. We want to apply the construction of the previous section, so we just need to choose a suitable set $I_+$ of morphisms.

\begin{definition}
	Let $P$ be a relational precubical set, $c$ a cube of $P$.
	We define $\uparrow c$ the \emph{upward neighborhood} of $c$, as the relational precubical set whose cubes are pairs $(d,f)$ with $d\to_f c$ in $P$, and whose relations are given by $(d',g\circ f)\to_g(d,f)$ with $d'\to_g d$ in $P$.
\end{definition}
\noindent Let $V_0:=\,\cdot \xrightarrow{} \cdot$ the graph with one edge, $V_1:=\,\cdot \xrightarrow{}\cdot\xrightarrow{}\cdot$ the graph with two consecutive edges.

\begin{definition}
	\begin{enumerate}
		\item A \emph{closed euclidean brick} is a precubical set of the form $\bigotimes_{1\leq i\leq n}V_{\epsilon_i}$ with $\epsilon_i\in\{0,1\}$.
		We write $c(\epsilon)$ with $\epsilon=\epsilon_1\epsilon_2\dots\epsilon_n$ for the cube $(p_{\epsilon_1},\dots,p_{\epsilon_n})$ with $p_0$ the edge of $V_0$ and $p_1$ the central vertex of $V_1$.
		\item A \emph{euclidean brick} $B_\epsilon$ is a relational precubical set of the form $\uparrow c(\epsilon)$.
		\item The \emph{codimension} of a euclidean brick is the sum \(\epsilon_1+\cdots+\epsilon_n\). 
		\item A euclidean brick of codimension $k$ has a unique $(n-k)$-cube,
		which is of minimal dimension among the cubes of the brick, namely $c(\epsilon)$. 
		We call it the \emph{minimal cube} $\min(B_\epsilon)$ of the brick.
		\item The unique brick of codimension $0$ is the \emph{open $n$-cube}, and is noted $C_n$.
		\item A \emph{positively generating cofibration} is an inclusion of the form $i_\epsilon:B_\epsilon\setminus\{\min(B_\epsilon)\}\hookrightarrow B_\epsilon$.
	\end{enumerate}
\end{definition}

\begin{example}
	For $n=2$, the euclidean bricks are given by:
	\begin{align*}
  B_{00}
  &=
  \begin{tikzpicture}[baseline={([yshift=-.5ex]current bounding box.center)}]
    \fill[color=mygray] (0.25,0.25) rectangle (0.75,0.75);
  \end{tikzpicture}
  &
  B_{01}
  &=
  \begin{tikzpicture}[baseline={([yshift=-.5ex]current bounding box.center)}]
    \fill[color=mygray] (0,0) rectangle (0.5,1);
    \draw[thick] (0,0.5) -- (0.5,0.5);
  \end{tikzpicture}
  &
  B_{10}
  &=
  \begin{tikzpicture}[baseline={([yshift=-.5ex]current bounding box.center)}]
    \fill[color=mygray] (0,0) rectangle (1,0.5);
    \draw[thick] (0.5,0) -- (0.5,0.5);
  \end{tikzpicture}
  &
  B_{11}
  &=
  \begin{tikzpicture}[baseline={([yshift=-.5ex]current bounding box.center)}]
    \fill[color=mygray] (0,0) rectangle (1,1);
    \draw[thick] (0.5,0) -- (0.5,1);
    \draw[thick] (0,0.5) -- (1,0.5);
    \filldraw (0.5,0.5) circle (.05);
  \end{tikzpicture}
\end{align*}
where the edges are directed from left to right and from bottom to top, and the first (respectively second) dimension of a square is the horizontal (respectively vertical) one (this will be a convention for all the figures). The generating cofibration $i_{11}$ is given by the inclusion:
\[\begin{tikzpicture}
    \fill[color=mygray] (0,0) rectangle (1,1);
    \draw[thick] (0.5,0) -- (0.5,1);
    \draw[thick] (0,0.5) -- (1,0.5);
    \filldraw[white] (0.5,0.5) circle (.05);

	\begin{scope}[xshift=20mm]
		\draw (0,0.5) node {$\hookrightarrow$};
	\end{scope}

	\begin{scope}[xshift=30mm]
		\fill[color=mygray] (0,0) rectangle (1,1);
    \draw[thick] (0.5,0) -- (0.5,1);
    \draw[thick] (0,0.5) -- (1,0.5);
    \filldraw (0.5,0.5) circle (.05);
	\end{scope}
  \end{tikzpicture}\]
  The intuition for $i_{11}$ is the following: in the process of building a cofibrant object, if we want to add a vertex we need to make sure that the ``neighborhood'' of this vertex will look like a $2$-grid. We have similar intuitions for $i_{01}$ and $i_{10}$. For $i_{00}$, we can always (freely) add an open square, as it is always a (rather trivial) $2$-grid. As a consequence, one would expect that objects built out of $I$ are local (i.e.~at every cube) $2$-grids.
\end{example}

% \begin{wrapfigure}{r}{0.2\textwidth}
%   \vspace{-6mm}
%   \[
%   \begin{tikzcd}
% 		\varnothing \arrow[r]\arrow[d] & B_{\epsilon'}\arrow[d, "\iota_-"]\\
% 		B_{\epsilon'}\arrow[r, "\iota_+"'] & B_\epsilon
% 	\end{tikzcd}
% \]
% \vspace{-0.6cm}
% \end{wrapfigure}
Let $B_\epsilon$ be a euclidean brick.
Let $i$ such that $\epsilon_i = 1$.
There are two inclusions $\iota_-,\,\iota_+:B_{\epsilon'}\hookrightarrow B_\epsilon$,
where $\epsilon'$ is obtained from $\epsilon$ by replacing $\epsilon_i$ by $0$, 
which are induced by the two inclusions $V_0\hookrightarrow V_1$.
In addition, we have: 
\begin{equation}\label{eq:iota}
	\Im(\iota_-)\cap\Im(\iota_+)=\varnothing
\end{equation}
Writing $j:=|\{k\leq i\,|\, \epsilon_k=0\}|$ and $m:=|\{k\leq n\,|\, \epsilon_k=0\}|$, we also have that $\min(B_{\epsilon})=\delta_{m,j}^+(\iota_-(\min(B_{\epsilon'})))$ and $\min(B_\epsilon)=\delta_{m,j}^-(\iota_+(\min(B_{\epsilon'})))$. This just comes from the definition of the face relations in the tensor product. This last point is generalized at the beginning of \S\ref{sec:blowup}. See in particular Remark~\ref{rem:g_w}.

\begin{example}
	For the brick $B_{11}$ and $i=1$, we get:
	\[\begin{tikzpicture}
    \fill[color=mygray] (-.25,-0.5) rectangle (.25,0.5);
    \draw[thick] (-.25,0) -- (0.25,0);

	\begin{scope}[xshift=15mm]
		\draw (0,0) node {$\hookrightarrow$};
		\draw (0,-0.2) node {\tiny $\iota_-$};
	\end{scope}

	\begin{scope}[xshift=31mm]
		\fill[color=mygray] (-0.5,-0.5) rectangle (0.5,0.5);
    \draw[thick] (-0.5,0) -- (0.5,0);
    \draw[thick] (0,-0.5) -- (0,0.5);
    \filldraw (0,0) circle (.05);
	\end{scope}

	\begin{scope}[xshift=47mm]
		\draw (0,0) node {$\hookleftarrow$};
		\draw (0,-0.2) node {\tiny $\iota_+$};
	\end{scope}

	\begin{scope}[xshift=62mm]
	\fill[color=mygray] (-.25,-0.5) rectangle (.25,0.5);
    \draw[thick] (-.25,0) -- (0.25,0);
	\end{scope}
  \end{tikzpicture}\]
  Notice that $\delta^+_{0,0}(\min(\Im(\iota_-)))=\delta^-_{0,0}(\min(\Im(\iota_+)))=\min(B_{11})$.
\end{example}

More generally, we define a poset $D_\epsilon$ in the following way. Let $\{+,-,0,1\}^n$ be the poset defined by the pointwise order generated by $0<+,-<1$. Let $D_\epsilon$ be the coslice under $\epsilon$, i.e.~the subposet consisting of the $w\in\{+,-,0,1\}^n$ such that $w_i=0$ if and only if $\epsilon_i=0$. This poset has a maximal element $\epsilon$. We have a canonical projection $-\land \epsilon:\{+,-,0,1\}^n\to D_\epsilon$, and a morphism $p:\{+,-,0,1\}^n\to \{0,1\}^n$ sending $0,+,-$ to $0$ and $1$ to $1$. For every $w\in D_\epsilon$, we have an inclusion $\iota_w:B_{p(w)}\xrightarrow{}B_\epsilon$. For $w\leq w'$, we write $\iota_{w\leq w'}:=\iota_{w\land {p(w')}}:B_{p(w)}\to B_{p(w')}$. As a consequence of equation~\ref{eq:iota} above, if we have $w$ and $w'$ such that $w_i=+$ and $w'_i=-$ for some $i$, then the images of $\iota_w$ and $\iota_{w'}$ are disjoint. Otherwise, there is an element $w''\leq w,w'$ which is the biggest one with this property, and we have a pullback square 
\begin{equation}\label{diag:pb}
	\begin{tikzcd}
		B_{p(w'')} \arrow[dr,dotted,"\iota_{w''}"]\arrow[r,"\iota_{w''\leq w'}"]\arrow[d,"\iota_{w''\leq w}"'] & B_{p(w')}\arrow[d, "\iota_{w'}"]\\
		B_{p(w)}\arrow[r, "\iota_w"'] & B_\epsilon
	\end{tikzcd}
\end{equation} 

The set of positively generating cofibrations is noted $I_+$, and we define $I:=I_+\cup\{\nabla_i\,|\, i\in I_+\}$. $I$ is stable by codiagonal by definition (and Remark~\ref{rem:nabla}). 
We need to show that it is of cofibrant domain.
By the last part of the proof of Proposition~\ref{prop:2of3}, 
we just need to treat the case of positively generating cofibrations.

\begin{remark}\label{rem:Depsilon}
The poset $D_\epsilon$ can be thought of as the category of elements of the partial precubical set $B_\epsilon$: its elements represent the cubes of $B_\epsilon$, and we have $w\leq w'$ if and only if $w'$ is a face of $w$. The correspondence is given by $w\mapsto \iota_w(\min(B_{p(w)}))$. In particular, the maximal element $\epsilon$ represents the minimal cube, and indeed it is a face of every other cube of $B_\epsilon$. For example, for $B_{11}$ the correspondance is given by:
\[
  \begin{tikzpicture}
    \fill[color=mygray] (0,0) rectangle (2,2);
    \draw[thick] (1,0) -- (1,2);
    \draw[thick] (0,1) -- (2,1);
    \filldraw (1,1) circle (.05);
	\draw (0,1) node[left]{$-1$};
	\draw (1,2) node[above]{$1+$};
	\draw (2,1) node[right]{$+1$};
	\draw (1,0) node[below]{$1-$};
	\draw (0.5,0.5) node{$--$};
	\draw (1.5,0.5) node{$+-$};
	\draw (0.5,1.5) node{$-+$};
	\draw (1.5,1.5) node{$++$};
  \end{tikzpicture}
\]
\end{remark}

\begin{proposition}\label{prop:B-minB}
	$B_\epsilon\setminus\{\min(B_\epsilon)\}$ is cofibrant for every $\epsilon$.
\end{proposition}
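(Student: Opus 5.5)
We argue by induction on the codimension $k$ of $\epsilon$, proving simultaneously that every brick $B_\epsilon$ is cofibrant and that $B_\epsilon\setminus\{\min(B_\epsilon)\}$ is cofibrant. For $k=0$, $B_\epsilon=C_n$ and $C_n\setminus\{\min(C_n)\}=\varnothing$, which is trivially cofibrant. The map $\varnothing\to C_n$ is $i_{00\dots0}$, so $C_n$ is cofibrant. In general, once $B_\epsilon\setminus\{\min(B_\epsilon)\}$ is known to be cofibrant, $B_\epsilon$ is cofibrant as well. Indeed, $\varnothing\to B_\epsilon$ factors as $\varnothing\to B_\epsilon\setminus\{\min(B_\epsilon)\}\xrightarrow{i_\epsilon}B_\epsilon$, and cofibrations are closed under composition. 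So the real content is the first statement.

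For the inductive step, fix $\epsilon$ of codimension $k\geq 1$. By Remark~\ref{rem:Depsilon}, $B_\epsilon\setminus\{\min(B_\epsilon)\}$ is the union of the images of the $\iota_w$ for $w\in D_\epsilon\setminus\{\epsilon\}$. In fact it suffices to take the maximal such $w$, namely the $2k$ elements obtained from $\epsilon$ by replacing a single $1$ by $+$ or $-$. Each of these images is a brick of codimension $k-1$, hence cofibrant by induction. I would express $B_\epsilon\setminus\{\min(B_\epsilon)\}$ as a colimit over the poset $D_\epsilon\setminus\{\epsilon\}$ of the diagram $w\mapsto B_{p(w)}$, with transition maps $\iota_{w\leq w'}$. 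To justify this colimit description, I would check that relational presheaves glue along such subobject diagrams. A cube, or a face relation, of the union lies in the image of some $\iota_w$, and the pullback squares~(\ref{diag:pb}) together with the disjointness~(\ref{eq:iota}) ensure that nothing is identified beyond the intersections. Concretely, the intersection of two images is either empty or the image of $\iota_{w''}$. This makes the diagram a ``union of subobjects'' colimit.

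To conclude that the colimit is cofibrant, I would build it by attaching cells along the poset in order of decreasing dimension of $B_{p(w)}$, i.e.\ increasing codimension of $p(w)$. Start with the minimal elements, those $w$ with no $1$ entries; the corresponding copies of $C_n$ are attached by pushouts of $i_{0\dots0}$ along maps out of $\varnothing$. At a stage where we add the cube indexed by $w$, the part of $B_{p(w)}$ already constructed is exactly $\bigcup_{w'<w}\Im(\iota_{w'\leq w})=B_{p(w)}\setminus\{\min(B_{p(w)})\}$. The new cube is then obtained by a pushout of $i_{p(w)}$. The pushout is legitimate because, by the pullback squares, the previously built pieces meet $B_{p(w)}$ only in this subobject. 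Since $D_\epsilon\setminus\{\epsilon\}$ is finite, this exhibits $\varnothing\to B_\epsilon\setminus\{\min(B_\epsilon)\}$ as a finite composite of pushouts of maps in $I_+$, hence as a cofibration. Organised this way, the argument no longer needs the induction hypothesis explicitly: it is a direct cell decomposition indexed by $D_\epsilon\setminus\{\epsilon\}$.

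The main obstacle is verifying that each attaching step is genuinely a pushout in $\RelPsh(\square)$. One must check that the face relations of the union are exactly generated by those inside the individual pieces $\Im(\iota_w)$, with no extra relations between cubes from different pieces. This also uses the fact that relations in $\uparrow c(\epsilon)$ are determined by the poset structure of $D_\epsilon$. I would handle it via the explicit description of cubes of $B_\epsilon$ as pairs $(d,f)$, using the correspondence of Remark~\ref{rem:Depsilon}. For two cubes, one in $\Im(\iota_w)$ and one in $\Im(\iota_{w'})$, I would show that any relation between them already occurs inside $\Im(\iota_{w''})$.
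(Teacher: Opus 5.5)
Your proof is correct. Up to identifying $B_\epsilon\setminus\{\min(B_\epsilon)\}$ as the colimit of the bricks $B_{p(w)}$ over $D_\epsilon\setminus\{\epsilon\}$, it coincides with the paper's. The two routes diverge in how they show that this colimit is cofibrant.

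The paper concludes in one line. Lifts of cofibrations against $I^\boxslash$ are unique (Corollary~\ref{cor:cofib} and Remark~\ref{rem:cofib}, ultimately resting on Proposition~\ref{prop:stable}), so cofibrant objects are closed under arbitrary colimits. The bricks of codimension $<k$ are cofibrant by induction, and that finishes it.

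You instead turn the colimit into an explicit finite $I_+$-cell complex, attaching the cube indexed by $w$ along $i_{p(w)}$ in order of decreasing dimension. This is essentially the cell-attaching argument the paper later uses for Proposition~\ref{prop:eucl->cofib}, applied to $B_\epsilon\setminus\{\min(B_\epsilon)\}$.

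Your route only uses closure of ${}^\boxslash(I^\boxslash)$ under pushouts and composition. So it does not depend on uniqueness of lifts, and it would work for any cofibrantly generated weak factorization system. It also dispenses with the induction hypothesis, as you note. The price is a pushout check in $\RelPsh(\square)$ at every step rather than a single colimit identification, and your global colimit description then plays no role in the final argument.

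For those checks, the facts you need differ slightly from the ones you list at the end:
\begin{itemize}
\item $\Im(\iota_w)=\{w'\in D_\epsilon \mid w'\le w\}$ is exactly the set of cubes having the new cube as a face;
\item $\iota_w$ reflects face relations;
\item no proper face of the new cube has been attached yet, which your ordering guarantees.
\end{itemize}
Together these show that each pushout is the induced subobject of $B_\epsilon$ on the previously attached cubes plus the new one.
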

\begin{proof}
	Since every morphism between cofibrant objects is a cofibration (Remark~\ref{rem:cofib}), we have that cofibrant objects are stable by colimits. So we procede by induction on the codimension of $B_\epsilon$. For codimension $0$, noting $\mathbf{0}$ the tuple only consisiting of zeros, $B_\mathbf{0}=C_n$ which has a unique cube, so $B_\mathbf{0}\setminus\{\min(B_\mathbf{0})\}=\varnothing$ the initial relational percubical set, which is cofibrant by definition. For the inductive step, suppose that the result was establish for codimension strictly smaller than $k>0$. As a consequence, every brick of codimension $<k$ is cofibrant. Now take a brick $B_\epsilon$ of codimension $k$. We define a diagram $F:D_\epsilon\to \ccat$ by: 
	\[F(w):=B_{p(w)}, \qquad F(w<w'):=\iota_{w\leq w'}\]
	We claim that $B_\epsilon\setminus\{\min(B_\epsilon)\}$ is the colimit of $F'$, the restriction of $F$ to $D_\epsilon\setminus\{\epsilon\}$. Indeed, since $F$ is a diagram, we have that $(F',B_\epsilon)$ is a cocone, so there is a morphism $\alpha:\colim F'\to B_\epsilon$. Clearly every cube of $B_\epsilon$ except the minimal one appears in $\Im(\alpha)$, so $\alpha$ factors through $B_\epsilon\setminus\{\min(B_\epsilon)\}$. Now by the explicit definition of colimits in relational presheaves given in~\cite[Proposition 36]{chamoun2026realization}, $\colim F'$ is a quotient of the disjoint union of the $B_{p(w)}$ by the equivalence relation generated by $\sim$, defined in the following way: given two cubes $c\in B_{p(w)}$ and $c\in B_{p(w')}$, we have $c\sim c'$ if and only if there is some $w''\leq w,w'$ and $c''\in B_{p(w'')}$ such that 
	\[\iota_{w''\leq w}(c'')=c \quad \text{and} \quad \iota_{w''\leq w'}(c'')=c'\]
	But again since $F$ is a diagram, $c\sim c'$ implies that $c$ and $c'$ represent the same cube of $B_\epsilon$ (i.e.~that $\iota_{w}(c)=\iota_{w'}(c')$). Conversely, if $c$ and $c'$ represent the same cube of $B_\epsilon$, by the pullback square (\ref{diag:pb}) and the discussion above it, $w$ and $w'$ are the same on every index where they are both in $\{+,-\}$, so we can find $w''\leq w,w'$, and a $c''$ witnessing that $c\sim c'$. So in fact $\sim$ is already an equivalence relation. Using this, it is straightforward to define an inverse $B_\epsilon\setminus\{\min(B_\epsilon)\}\to \colim F$ to $\alpha$, which concludes the induction.
\end{proof}

% \begin{wrapfigure}{r}{0.3\textwidth}
%   \begin{center}
%     \begin{tikzpicture}
%     \fill[color=mygray] (0,0) rectangle (2,2);
%     \draw[thick] (1,0) -- (1,2);
%     \draw[thick] (0,1) -- (2,1);
%     \filldraw (1,1) circle (.05);
% 	\draw (0,1) node[left]{$-1$};
% 	\draw (1,2) node[above]{$1+$};
% 	\draw (2,1) node[right]{$+1$};
% 	\draw (1,0) node[below]{$1-$};
% 	\draw (0.5,0.5) node{$--$};
% 	\draw (1.5,0.5) node{$+-$};
% 	\draw (0.5,1.5) node{$-+$};
% 	\draw (1.5,1.5) node{$++$};
%   \end{tikzpicture}
%   \end{center}
% \end{wrapfigure}

\subsection{Euclidean precubical sets}

As a consequence of the last section, the construction of \S\ref{sec:general} applies, and we get a model structure on $\ccat$. Now we study the cofibrant replacement in this model structure,
which is unique up to isomorphism (Proposition~\ref{prop:stable}).
We start by identifying a particular set of cofibrant objects, and we will later show that in fact every cofibrant object is in this set.

\begin{definition}
	A morphism of relational precubical sets $\alpha:P\to Q$ is \emph{a local embedding} if for all distinct cubes $a,\,b\in P$, if there is $w\in\square$ and $c\in P$ such that $a\to_w c$ and $b\to_w c$ then $\alpha(a)\ne\alpha(b)$.
\end{definition}

\begin{example}
	For example, the following morphism, that identifies $e$ and $e'$, is \emph{not} a local embedding, since $e$ and $e'$ have the same target.
	\[
  \begin{tikzpicture}
    \fill[color=mygray] (-1,-1) rectangle (1,1);
    \draw[thick] (0,-1) -- (0,1);
    \draw[thick] (-1,0) -- (1,0);
    \filldraw (0,0) circle (.05);
	\draw (-1,0) node[left]{$e$};
	\draw (0,-1) node[below]{$e'$};

	\begin{scope}[xshift=30mm]
		\draw (0,0) node{$\longrightarrow$}; 
		\draw (0.5,0.8) node{};
	\end{scope}

	\begin{scope}[xshift=60mm]
		\fill[color=mygray] (-1,-1) rectangle (1,1);
		\draw[thick] (0,-1) -- (0,1);
		\draw[thick] (-1,0) -- (1,0);
		\filldraw (0,0) circle (.05);
		\draw (-1,0) node[left]{$e$};
		\draw (0,-1) node[below]{$e$};
	\end{scope}
  \end{tikzpicture}
\]
\end{example}

\begin{definition}\label{def:euclidean_precubical_set}
	A relational precubical set $P$ is \emph{euclidean} (or \emph{$n$-euclidean}) if for every cube $c$ of $P$, there is a euclidean brick $B_\epsilon$ and a surjective local embedding $\phi_c:B_\epsilon\xrightarrow{}\,\uparrow c$, which is called a \emph{chart} at $c$.
\end{definition}

\begin{remark}
This is equivalent to the following more ``geometric'' definition:
the $2$-subdivision of $P$ is locally isomorphic to $B_\textbf{1}$, which is the combinatorial analog of the pointed ordered space $(\R^n,0)$. See~\cite[Appendix D]{chamoun2026realization}. 
\end{remark}

% \begin{remark}
% 	For dimension reasons, the chart $\phi_c$ of Definition \ref{def:euclidean_precubical_set} sends the minimal cube of $B_\epsilon$ to $c$. 
% 	In particular, if $k$ is the dimension of $c$, the codimension of $B_\epsilon$ is necessarily equal to $n-k$. 
% \end{remark}

\begin{proposition}\label{prop:eucl->cofib}
	Euclidean relational precubical sets are cofibrant.
\end{proposition}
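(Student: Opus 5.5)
We will show that a euclidean relational precubical set $P$ is cofibrant by building $P$ from $\varnothing$ as a transfinite composite of pushouts of positively generating cofibrations $i_\epsilon$. Since cofibrations are closed under pushouts and transfinite composition, this suffices. The natural filtration is by dimension, going downward. Because $\square$ only has maps raising dimension, $\uparrow c$ contains $c$ and cubes of dimension $\geq \dim c$. The charts $\phi_c:B_\epsilon\to\uparrow c$ are surjective local embeddings, and dimension forces $\phi_c(\min B_\epsilon)=c$. Hence $B_\epsilon$ has codimension $k:=n-\dim c$, so $P$ has no cubes of dimension $>n$.

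Let $P_{\geq m}$ be the sub-relational precubical set of cubes of dimension $\geq m$, with the restricted relations. Then $P_{\geq n+1}=\varnothing$, $P_{\geq 0}=P$, and each $P_{\geq m}$ is upward closed. We show each inclusion $P_{\geq m+1}\hookrightarrow P_{\geq m}$ is a pushout of the coproduct
\[\coprod_{\dim c=m} i_{\epsilon(c)}:\coprod_{\dim c=m} B_{\epsilon(c)}\setminus\{\min\}\hookrightarrow \coprod_{\dim c=m} B_{\epsilon(c)},\]
taken along the maps $\phi_c$ restricted to $B_{\epsilon(c)}\setminus\{\min\}$, which land in $P_{\geq m+1}$. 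This is a pushout of a coproduct of elements of $I_+$, hence a cofibration. The composite $\varnothing\to P$ of these finitely many steps is then a cofibration.

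The core step, and the main obstacle, is verifying this pushout square. We use the explicit description of colimits in relational presheaves from~\cite[Proposition 36]{chamoun2026realization}: the pushout is $P_{\geq m+1}$ together with one new cube $\min B_{\epsilon(c)}$ for each $c$ of dimension $m$. Its relations are those generated by the relations of $P_{\geq m+1}$ and the images under $\phi_c$ of the relations in $B_{\epsilon(c)}$.

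Two things must be checked. First, the induced map to $P_{\geq m}$ is bijective on cubes. This holds because distinct $m$-cubes $c$ get distinct new cubes, and there are no identifications among the new cubes: the pushout only glues along the complement of $\min$. Second, the relations match. Every relation $d\to_f c$ in $P$ with $\dim c=m$ comes from the chart, since $(d,f)$ is a cube of $\uparrow c$ and $\phi_c$ is surjective, so $(d,f)$ has a preimage $b$ with $b\to_f\min B_\epsilon$ in $B_\epsilon$. Relations among higher cubes are already in $P_{\geq m+1}$. Conversely, relations in the pushout map to genuine relations of $P$ because the $\phi_c$ are morphisms.

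One subtlety is transitivity closure in the pushout: composites through new cubes must not create relations absent from $P$. Since new cubes are minimal in dimension within the pushout, a new cube only ever appears as a target, so this cannot happen. Note that the local embedding condition is not needed for this direction; it will matter for the converse characterization.

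If the explicit pushout description proves awkward, there is a fallback. We check directly that $P_{\geq m+1}\hookrightarrow P_{\geq m}$ has the left lifting property against $I^\boxslash$, equivalently against trivial fibrations, by lifting cube-by-cube on the new $m$-cubes. We use the lift for $i_{\epsilon(c)}$ composed with $\phi_c$, and we note that relations only need checking on $\uparrow c$.
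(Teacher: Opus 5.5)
Your proposal is correct and follows the paper's proof essentially verbatim. The paper also filters $E$ by the sub-objects $E_k$ of cubes of dimension $>n-k$, and shows that each inclusion $E_k\hookrightarrow E_{k+1}$ is a pushout of a coproduct of the $i_\epsilon$ along the restricted charts $\phi'_c$. Your explicit verification of that square (bijectivity on cubes, surjectivity of $\phi_c$ supplying every relation into a new cube, and the comparison map being a morphism) spells out what the paper dismisses as ``easily seen to be cocartesian''; like the paper's argument, it never uses the local embedding condition.
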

\textit{Proof.}
	Let $E$ be euclidean. By induction on $k$, we show the following: the relational precubical subset $E_k$ of $E$ consisting of the cubes of dimension $> n-k$ is cofibrant. This will immediately prove the proposition. Since charts are surjective, $E_0=\varnothing$ so it is vacuously true.

	\begin{wrapfigure}{r}{0.35\textwidth}
		\vspace{-2ex}\begin{tikzcd}
		B_\epsilon\setminus\{\min(B_\epsilon)\} \arrow[r,"\phi'_c"]\arrow[d,"i_\epsilon"'] & E_{k}\arrow[d]\\
		B_\epsilon \arrow[r,"\phi_c"'] & E_{k}\cup\{c\}
	\end{tikzcd}
	\end{wrapfigure}
	Now suppose the result established for every integer strictly smaller than $k+1$ for some $0\leq k\leq n$. Let $c$ be a $(n-k)$-cube of $E$. By hypothesis, there is a euclidean brick $B_\epsilon$ of codimension $k$ and an chart $\phi_c:B_\epsilon\xrightarrow{}\,\uparrow c$ at $c$. $\phi_c$ induces a morphism $\phi_c:B_\epsilon\to E$ by composing with the first projection. This gives the commutative diagram on the right, where $\phi'_c$ is the restriction of $\phi_c$, which is easily seen to be cocartesian (intuitively, every relation between $c$ and other cubes of $E$ of higher dimension is encoded in $\phi_c$, by definition of $\uparrow c$). Since $E_{k+1}=E_{k}\cup \bigcup_{c\in E_{k+1}\setminus E_k}\{c\}$, we get that $E_{k}\to E_{k+1}$ is a cofibration, as a pushout of a sum of generating cofibrations. By the induction hypothesis, $\varnothing\to E_{k+1}$ is a cofibration.
	% Let $P$ be euclidean. 
	% First note that $P$ is of maximal dimension $n$.
	% Now take a trivial fibration $\beta:A\xrightarrow{}B$,
	% and a morphism $f:P\xrightarrow{}B$.
	% We want to define a morphism $\tilde{f}:P\xrightarrow{}B$.
	% For every $n$-cube $c$ of $P$, the following diagram has a unique lift:
	% \begin{center}
	% 	\begin{tikzcd}
	% 		& & A\arrow[d, "\beta"]\\
	% 		C_n \arrow[r, hook]\arrow[urr, dotted] & P \arrow[r,"f"'] & B
	% 	\end{tikzcd}
	% \end{center}
	% we define $\tilde{f}(c)$ to be the image of this lift.
	% Now for any cube $c$ of $P$, there is a brick $B_\epsilon$ and a local surjection $\phi_c$.
	% So we have a lift:
	% \begin{center}
	% 	\begin{tikzcd}
	% 		\bigsqcup_{1\leq i \leq 2^k}C_n \arrow[d, hook]\arrow[rrr, "\tilde{f}"] & & & A\arrow[d, "\beta"]\\
	% 		B_\epsilon \arrow[r, "\phi_c"']\arrow[urrr, dotted] & \uparrow c \arrow[r, hook] & P \arrow[r,"f"'] & B
	% 	\end{tikzcd}
	% \end{center}
	% since $\tilde{f}$ is already defined on the $n$-cubes.
	% Now $P$ is euclidean,
	% so we can show by induction on the dimension of the cubes of $B_\epsilon$ that
	% $h$ factors through $\phi_c$ by some $h'$, 
	% using the uniqueness of the lifts along positvely generating cofibrations.
	% We define $\tilde{f}(c):=h'(c)$.
	% Note that the lifts are unique,
	% so the different lifts for varying $c$ are compatible.
	% This finishes the definition of $\tilde{f}$. \\
	% \textcolor{red}{TODO: Write the induction. I fact, we should be able to avoid using uniqueness, by building P by successive pushouts.}
\qed

\subsection{The blowup}\label{sec:blowup}
The combinatorial blowup of a precubical set was defined in~\cite{chamoun2026realization} as an endofunctor of the category of relational precubical sets, satisfying that its realization in locally ordered spaces is exactly the blowup operation defined in~\cite{haucourt2025non,chamoun2025non}. But this is not completely satisfying: this operation is defined with respect to a topological one, and thus does not satisfy a good universal property in the combinatorial world. Here, we proceed in a different way: having defined the subcategory of euclidean precubical sets in the previous section, we prove now that it is coreflective. The proof of coreflectivity will follow from an abstract characterization of these objects as the cofibrant objects of the model structure defined above, illustrating the interest of such model structures. Note that the construction in not the same as in~\cite{chamoun2025non}, although it is also called \emph{blowup}. 

First, recall from~\cite{grandis2003cubical} that a morphism $f:m\to m+k$ in~$\square$ has a unique normal form
\[
  f=d^{\eta_k}_{m+k-1,i_k}\circ\ldots \circ d^{\eta_2}_{m+1,i_2}\circ d^{\eta_1}_{m,i_1}
\]
with $0\leq i_1<i_2<\ldots<i_k<m+k$, so it can be seen as a word $w\in\{+,-,0\}^{m+k}$ with exactly $k$ components different from $0$, 
where $w_{i_j}=\eta_j$ and $w_i=0$ for the other indices. Conversely, given a tuple $\epsilon\in\{0,1\}^n$ of codimension $n-m$ and $w\in D_\epsilon$ such that $|\{i\,|\,w_i\ne 0,1\}|=k$, we can form the word $\hat{w}$ by removing all the letters $1$ from $w$, and then, supposing that $\{i_1,\dots,i_k\}$ is the ordered set $\{i\,|\,\hat{w}_i\ne 0\}$, we can define $g_w:m\to m+k$ by 
\[
  g_w:=d^{\hat{w}_{i_k}^*}_{m+k-1,i_k}\circ\ldots \circ d^{\hat{w}_{i_1}^*}_{m+1,i_2}\circ d^{\hat{w}_{i_1}^*}_{m,i_1}
\]
with $+^*=-$ and $-^*=+$. 

\begin{remark}\label{rem:g_w}
With the same notations, one checks (recalling Remark~\ref{rem:Depsilon}) by induction on $k$ that $w\to_{g_w} \epsilon$ in $B_\epsilon$, and in fact that $\uparrow \epsilon$ is exactly $\{(w,g_w)\,|\,w\in D_\epsilon\}$.
\end{remark} 

\begin{definition}
	The \emph{blowup} (or \emph{$n$-blowup}) $\tilde{P}$ of a relational precubical set $P$ is (the relational precubical set) defined as follows:
	\begin{align*}
		& \tilde{P}(k) = \bigsqcup_{\epsilon\text{ of codimension }k}\Hom(B_\epsilon,P)\\
	    & \tilde{P}(g) = \{(f,f\circ \iota_w)\,|\, f:B_\epsilon\to P,\,w\in D_\epsilon,\,w\to_g \epsilon\}
	\end{align*}
	The \emph{blowup map} (or \emph{$n$-blowup map}) $\beta_P:\tilde{P}\xrightarrow{}P$ is defined by $\beta_P(f):= f(\min(\dom(f)))$.
\end{definition}

\begin{example}
	Consider the precubical set $X$ with one vertex $v$, one edge $e$ and one square $c$. For every euclidean brick $B_\epsilon$, there is a unique morphism $B_\epsilon\to X$. Since there is one euclidean brick of codimension $0$, one of codimension $2$, and two of codimension $1$, we get that the blowup of $X$ is the torus:
	\begin{center}
		\begin{tikzpicture}
			\filldraw[mygray] (0,0) rectangle (1,1);
			\draw (0.5,0.5) node{$c$};
			\filldraw (0,0) circle (0.05);
			\filldraw (0,1) circle (0.05);
			\filldraw (1,0) circle (0.05);
			\filldraw (1,1) circle (0.05);
			\draw (0,0) node[below left]{$v$};
			\draw (0,1) node[above left]{$v$};
			\draw (1,0) node[below right]{$v$};
			\draw (1,1) node[above right]{$v$};
			\draw (0,0) edge[thick, "$e_1$"'] (1,0);
			\draw (0,0) edge[thick, "$e_2$"] (0,1);
			\draw (0,1) edge[thick, "$e_1$"] (1,1);
			\draw (1,0) edge[thick, "$e_2$"'] (1,1);

			\begin{scope}[xshift=20mm]
				\draw (0.5,0.5) node{$\longrightarrow$}; 
				\draw (0.5,0.8) node{\tiny $\beta_X$};
			\end{scope}

			\begin{scope}[xshift=40mm]
				\filldraw[mygray] (0,0) rectangle (1,1);
			\draw (0.5,0.5) node{$c$};
			\filldraw (0,0) circle (0.05);
			\filldraw (0,1) circle (0.05);
			\filldraw (1,0) circle (0.05);
			\filldraw (1,1) circle (0.05);
			\draw (0,0) node[below left]{$v$};
			\draw (0,1) node[above left]{$v$};
			\draw (1,0) node[below right]{$v$};
			\draw (1,1) node[above right]{$v$};
			\draw (0,0) edge[thick, "$e$"'] (1,0);
			\draw (0,0) edge[thick, "$e$"] (0,1);
			\draw (0,1) edge[thick, "$e$"] (1,1);
			\draw (1,0) edge[thick, "$e$"'] (1,1);
			\end{scope}
		\end{tikzpicture}
	\end{center}
\end{example}

\begin{proposition}\label{prop:bup_eucl}
	The blowup of a relational precubical set is euclidean.
\end{proposition}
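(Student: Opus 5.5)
The plan is to write down the chart directly at every cube of $\tilde{P}$, and then check the two properties required by Definition~\ref{def:euclidean_precubical_set}. Fix a cube of $\tilde{P}$, that is, a morphism $f:B_\epsilon\to P$ with $\epsilon$ of codimension $k$. I propose to take the brick $B_\epsilon$ itself and to define
\[
  \phi_f:B_\epsilon\to\,\uparrow f
\]
by using Remark~\ref{rem:Depsilon}. Every cube of $B_\epsilon$ has the form $\iota_w(\min(B_{p(w)}))$ for a unique $w\in D_\epsilon$. We send it to the pair $(f\circ\iota_w, g_w)$. By the definition of $\tilde{P}(g)$, we have $f\circ \iota_w\to_{g_w} f$ exactly when $w\to_{g_w}\epsilon$ in $B_\epsilon$, and Remark~\ref{rem:g_w} gives this. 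So the pair is a genuine cube of $\uparrow f$.

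The first thing to check is that $\phi_f$ is a morphism of relational presheaves. Take a relation $w'\to_h w$ in $B_\epsilon$, meaning $w\leq w'$ in $D_\epsilon$ together with the appropriate face. We need
\[
  (f\iota_{w'},g_{w'})\to_h(f\iota_w,g_w)
  \qquad\text{in }\uparrow f.
\]
This amounts to two facts:
\begin{itemize}
  \item $g_{w'}=g_w\circ h$, which follows from uniqueness of normal forms in $\square$, since $w'\to_{h} w\to_{g_w}\epsilon$ composes to $w'\to_{g_{w'}}\epsilon$ and Remark~\ref{rem:g_w} describes $\uparrow\epsilon$ exactly;
  \item $f\iota_{w'}=(f\iota_w)\circ\iota_{w'\land p(w)}$, which is functoriality of the inclusions, $\iota_w\circ\iota_{w'\leq w}=\iota_{w'}$.
\end{itemize}
The second fact shows that $(f\iota_{w'}, f\iota_w)$ lies in $\tilde P(h)$, as required.

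Next comes \textbf{surjectivity}. A cube of $\uparrow f$ is a pair $(f',g)$ with $f'\to_g f$ in $\tilde P$. By the definition of $\tilde{P}(g)$, this means $f'=f\circ\iota_w$ for some $w\in D_\epsilon$ with $w\to_g\epsilon$. Remark~\ref{rem:g_w} then forces $g=g_w$, so $(f',g)=\phi_f(w)$.

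The last property is the \textbf{local embedding} condition. Suppose $a\to_u c$ and $b\to_u c$ in $B_\epsilon$, and $\phi_f(a)=\phi_f(b)$. Write $a=w_a$, $b=w_b$, $c=w_c$. Equality of the images gives in particular $g_{w_a}=g_{w_b}$. Since $g_w$ determines the positions and signs of the letters $\pm$ in $w$ outside the $1$'s, this gives $w_a=w_b$, provided that $w_a$ and $w_b$ have their $1$'s in the same places. They do: both lie above $w_c$ via the same $u$, and the letters of $w$ that equal $1$ are exactly the positions where $\epsilon_i=1$ and $w_i=1$, which is determined by $w_c$ and $u$. Hence $a=b$.

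I expect the main obstacle to be precisely this combinatorial bookkeeping: showing that $w\mapsto g_w$ is injective on the relevant fibres, and that relations in $B_\epsilon$ correspond exactly to composites of normal forms. This is why I lean on Remark~\ref{rem:g_w}, the identification $\uparrow\epsilon\cong\{(w,g_w)\}$, rather than on explicit index computations. With it, the chart is essentially the isomorphism $B_\epsilon\cong\,\uparrow\min(B_\epsilon)$ transported along $f$, and each of the properties above reduces to that identification.
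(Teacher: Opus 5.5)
Your chart $w\mapsto (f\circ\iota_w,g_w)$, your morphism check and your surjectivity argument are the same as in the paper. Your morphism check is in fact more explicit than the paper's one-line ``this induces a morphism''. One small slip there: by Remark~\ref{rem:Depsilon}, a relation $w'\to_h w$ corresponds to $w'\leq w$ in $D_\epsilon$, not $w\leq w'$. Your own formula with $\iota_{w'\leq w}$ already assumes the correct direction.

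The genuine gap is in the local embedding step. You claim that the positions of the letters $1$ in $w_a$ are determined by $w_c$ and $u$, but this is false. The word $u$ is indexed by the coordinates of the cube $a$, i.e.\ by the positions where $w_a\neq 1$, so it cannot tell you where those positions are. Concretely, in $B_{11}$ take $c=11$, the minimal vertex, and $u=d^+_{0,0}$. Both $w_a=-1$ and $w_b=1-$ satisfy $w_a\to_u 11$ and $w_b\to_u 11$; these are exactly the edges $e,e'$ in the paper's non-example of a local embedding. Their $1$'s sit in different places, and moreover $g_{-1}=g_{1-}=d^+_{0,0}$. So the second components of $\phi_f(a)$ and $\phi_f(b)$ do not separate them. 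An argument that only uses $g_{w_a}=g_{w_b}$ cannot conclude here; the map $w\mapsto g_w$ alone identifies these two edges.

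The missing ingredient is the first component. From $\phi_f(a)=\phi_f(b)$ you also get $f\circ\iota_{w_a}=f\circ\iota_{w_b}$ as cubes of $\tilde{P}$. The cubes of $\tilde{P}$ form a disjoint union $\bigsqcup_{\epsilon'}\Hom(B_{\epsilon'},P)$, so this forces equal domains $B_{p(w_a)}=B_{p(w_b)}$, i.e.\ $p(w_a)=p(w_b)$. That is precisely the statement that the $1$'s are in the same places, and your normal-form argument then gives $w_a=w_b$. This is exactly how the paper argues: its claim shows that $g_w=g_{w'}$ with $w\neq w'$ forces $p(w)\neq p(w')$, and it then separates the images by their domain bricks. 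With this repair you actually obtain that $\phi_f$ is injective, so the hypothesis $a\to_u c$, $b\to_u c$ is not even needed.
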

\begin{proof}
	Take a $k$-cube $f:B_\epsilon\xrightarrow{}P$ of $\tilde{P}$. 
	From the definition and Remark~\ref{rem:g_w}, we have an explicit description of $\uparrow f$ in $\tilde{P}$ as $\{(f\circ \iota_w,g_w)\,|\,w\in D_\epsilon\}$. We claim that $f$ itself is a chart at $f$. More precisely, recall (Remark~\ref{rem:Depsilon}) that the cubes of $B_\epsilon$ can be represented by the elements of $D_\epsilon$. So we can map every $w\in D_\epsilon$ to the cube $f\circ \iota_w$ of $\tilde{P}$ (in fact to $(f\circ \iota_w,g_w)$ in $\uparrow f$). This induces a morphism of relational precubical sets. We need to prove that this is a chart at $f$. We already argued that it is surjective on $\uparrow f$.
	We still need to check that it is a local embedding. First we prove the following: if we have $w,w'\in D_\epsilon$ such that $g_w=g_{w'}$, then either $w=w'$ or there is $i$ such that $w_i=1$ and $w'_i\in\{+,-\}$ (or the opposite). Indeed, following the construction of $g_w$, if we have $w_i\ne w'_i$ but they agree on every index $i$ such that $w_i=1$ or $w'_i=1$, then $g_w$ and $g_{w'}$ have different normal forms, so $g_w\ne g_{w'}$, which proves the claim.
	Now take $w,w'\in D_\epsilon$ and $g\in\square$ such that $w\to_{g} \epsilon$ and $w'\to_{g} \epsilon$. By Remark~\ref{rem:g_w}, this means that $g=g_w=g_{w'}$. By the claim, this implies that there exists $i$ such that $w_i=1$ and $w'_i\in\{+,-\}$ (or the opposite). But then $p(w)\ne p(w')$, so $(f\circ \iota_w,g_w)\ne (f\circ \iota_{w'},g_w)$. Clearly this case is enough, which concludes the proof.
\end{proof}

\begin{theorem}\label{thm:bup}
	Every blowup map is a trivial fibration.
\end{theorem}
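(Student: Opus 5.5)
Every morphism is a fibration, so a trivial fibration is simply a weak equivalence, i.e.\ an element of $I^\boxslash$. By Lemma~\ref{lem:unique_lift} and the definition $I=I_+\cup\{\nabla_i\}$, it is enough to show that $\beta_P$ has the \emph{unique} right lifting property with respect to every positively generating cofibration $i_\epsilon:B_\epsilon\setminus\{\min(B_\epsilon)\}\hookrightarrow B_\epsilon$. So fix a commutative square with top map $u:B_\epsilon\setminus\{\min(B_\epsilon)\}\to\tilde P$ and bottom map $f:B_\epsilon\to P$. I will construct the unique diagonal $h:B_\epsilon\to\tilde P$.

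\textbf{Existence.} The natural candidate sends each cube $\iota_w(\min(B_{p(w)}))$, for $w\in D_\epsilon$, to the cube $f\circ\iota_w\in\tilde P$. This is the map used in the proof of Proposition~\ref{prop:bup_eucl}, which we know is a morphism of relational precubical sets. We also have $\beta_P\circ h=f$, because $\beta_P(f\circ\iota_w)=f(\iota_w(\min B_{p(w)}))$. The real content is that $h$ restricts to $u$ on $B_\epsilon\setminus\{\min\}$. Fix $w\neq\epsilon$ and put $u(w)=:f_w:B_{\epsilon'}\to P$, a cube of $\tilde P$. I must show that $\epsilon'=p(w)$ and $f_w=f\circ\iota_w$.

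\textbf{Agreement with $u$.} The dimension of the cube is preserved by $u$, which fixes the codimension of $\epsilon'$ but not $\epsilon'$ itself. I will argue by induction on the codimension of $p(w)$, i.e.\ downward on cube dimension in $B_\epsilon$. For codimension $0$, $B_{\mathbf 0}=C_n$ is a single cube, so $f_w$ is determined by its value, which is $\beta_P(f_w)=f(w)$. For the inductive step, $u$ preserves relations, so each relation $w'\to_g w$ in $B_\epsilon$ (with $w'<w$) gives $u(w')\to_g f_w$ in $\tilde P$. By definition of $\tilde P(g)$, this means $u(w')=f_w\circ\iota_v$ for some $v\in D_{\epsilon'}$ with $v\to_g\epsilon'$. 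By Remark~\ref{rem:g_w}, $g$ determines $v$ in the relevant sense. Ranging over all $w'<w$, the upward neighbourhood of $f_w$'s minimal cube is matched with that of $w$ in $B_{p(w)}$, and the local-embedding claim from Proposition~\ref{prop:bup_eucl} then forces $\epsilon'=p(w)$. Once $\epsilon'=p(w)$ is known, $f_w$ and $f\circ\iota_w$ agree on the minimal cube via $\beta_P$. On all other cubes they agree by the induction hypothesis, since $f_w\circ\iota_v=u(w')=f\circ\iota_{w'}=f\circ\iota_w\circ\iota_v$. Hence $f_w=f\circ\iota_w$.

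\textbf{Uniqueness.} Any lift $h'$ must send $\min(B_\epsilon)$ to some $F:B_{\epsilon''}\to P$ with $\beta_P(F)=f(\min)$. The relations from the cubes $w$ to $\min(B_\epsilon)$ force $h'(w)=u(w)=F\circ\iota_v$ for suitable $v$. The same argument as above then gives $\epsilon''=\epsilon$ and $F$ agreeing with $f$ on all non-minimal cubes, and on the minimal one through $\beta_P$. So $F=f$.

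The main obstacle is the step showing $\epsilon'=p(w)$, i.e.\ that the relations of $\tilde P$ are rigid enough that the ``shape'' $\epsilon'$ of a cube is recovered from its upward neighbourhood. I would handle it by comparing the words $g_v$ (Remark~\ref{rem:g_w}) coordinate by coordinate: a relation $w'\to_g w$ whose normal form has a $\pm$ in a given position forces a $1$ in the corresponding position of $\epsilon'$. This reuses the normal-form argument from Proposition~\ref{prop:bup_eucl}.
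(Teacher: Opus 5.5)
Your proof is correct in substance, but it is organized differently from the paper's.

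\textbf{The paper's route.} The paper inducts on the codimension of $\epsilon$ in the statement itself. By the inductive hypothesis, $\beta_P$ lifts uniquely against the generating cofibrations of lower codimension. Through the colimit presentation of $B_\epsilon\setminus\{\min(B_\epsilon)\}$ in the proof of Proposition~\ref{prop:B-minB}, it therefore lifts uniquely against $\varnothing\to B_\epsilon\setminus\{\min(B_\epsilon)\}$. So the top map of the square is the unique map lying over the restriction of $f$, and it coincides with the restriction of $h\colon w\mapsto f\circ\iota_w$ for free. Only the minimal cube is left to check by hand.

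\textbf{Your route.} You fix $\epsilon$ and prove directly, by induction on the codimension of $p(w)$ inside $B_\epsilon$, that every map over $f$ is $w\mapsto f\circ\iota_w$. This is more elementary and does not need Proposition~\ref{prop:B-minB}. It also confronts shape rigidity explicitly. The paper's uniqueness step needs the same fact but passes over it when it says $h(\epsilon)$ ``corresponds to a morphism $B_\epsilon\to P$'' and reads $h(\epsilon)(w)$ off the relation $h(w)\to_{g_w}h(\epsilon)$.

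\textbf{Two details to state precisely.}

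First, to get $\epsilon'=p(w)$, use a top-dimensional $w'\le w$, i.e.\ one with no letter $1$. Then the normal form of $g$ is a word of length $n$ whose zero pattern is that of $p(w)$. The matching $v\in D_{\epsilon'}$ must also contain no letter $1$, because $g$ has codomain $n$. So the same zero pattern is also that of $\epsilon'$. For lower-dimensional $w'$ your coordinate-by-coordinate comparison is not literal, because removing the letters $1$ shifts positions.

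Second, in the chain $f_w\circ\iota_v=u(w')=f\circ\iota_{w'}=f\circ\iota_w\circ\iota_v$ you silently identify the $v$ produced by the relation in $\tilde P$ with the preimage $\bar v$ of $w'$ under $\iota_w$. The relation only gives $g_v=g_{\bar v}$, and $g$ does not determine $v$: in $D_{011}$, both $0{-}1$ and $01{-}$ give $d^+_{1,1}$. You obtain $v=\bar v$ as follows. The induction hypothesis says $u(w')$ has shape $p(w')=p(\bar v)$, so $p(v)=p(\bar v)$. The normal-form claim in the proof of Proposition~\ref{prop:bup_eucl} then turns $g_v=g_{\bar v}$ together with $p(v)=p(\bar v)$ into $v=\bar v$.

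The same two points also handle the final uniqueness step at $\min(B_\epsilon)$.
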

\textit{Proof.}
	We prove this by induction on the codimension $k$ of $\epsilon$. For $k=0$, notice that there is a unique $n$-cube in $\tilde{P}$ over every $n$-cube of $P$, because there is no non-trivial endomorphism $C_n\to C_n$. 
	
	\begin{wrapfigure}{r}{0.3\textwidth}
		\vspace{-1ex}\begin{tikzcd}
		B_\epsilon\setminus\{\min(B_\epsilon)\} \arrow[r,"g"]\arrow[d,"i_\epsilon"'] & \tilde{P}\arrow[d,"\beta_P"]\\
		B_\epsilon \arrow[r,"f"']\arrow[ur,dotted,"h"'] & P
	\end{tikzcd}
	\end{wrapfigure}
	Now for the inductive step, take $f:B_\epsilon\to P$ with $\epsilon$ of codimension $k$ and a commutative square as on the right.
	By the induction hypothesis and the proof of Lemma~\ref{prop:B-minB}, there is a unique such $g$ making the square commute. In addition, by the very definition of $\tilde{P}$, there is a morphism $h:B_\epsilon\to \tilde{P}$ such that $\beta_P\circ h=f$, which is $w\mapsto f\circ\iota_w$. So by uniqueness $g$ is the restriction of this $h$ to $B_\epsilon\setminus\{\min(B_\epsilon)\}$, and $h$ is indeed a lift. We still need to prove uniqueness. In other words, we have to prove that for any lift $h$, we have $h(\epsilon)=f$. We have that $h(\epsilon)$ corresponds to a morphism $h(\epsilon):B_\epsilon\to P$ such that $h(\epsilon)(\epsilon)=f(\epsilon)$ (commutation of the lower triangle) and for $w\in B_\epsilon\setminus\{\min(B_\epsilon)\}$: 
	\begin{align*}
		h(\epsilon)(w) & = h(w)(\min(B_{p(w)})) \quad \text{(because $h(w)\to_{g_w} h(\epsilon$) in $\tilde{P}$)}\\
		               & = g(w)(\min(B_{p(w)})) \quad \text{(commutation of the upper triangle)}\\
					   & = f(w) \qquad\qquad\qquad\, \text{(uniqueness of $g$)}
	\end{align*}
	So indeed $h(\epsilon)=f$, which concludes. \qed
	%\vspace{-1ex}

As a consequence of Theorem~\ref{thm:bup} and Proposition~\ref{prop:bup_eucl}, 
the cofibrant replacement of $P$ is exactly $\tilde{P}$.
In addition, we deduce from Proposition~\ref{prop:bup_eucl} and Corollary~\ref{cor:cofib}
that a relational precubical set is cofibrant if and only if it is euclidean,
since it is cofibrant if and only if it is isomorphic to its cofibrant replacement.

\begin{theorem}\label{thm:cofib-euclidean}
	The cofibrant objects are exactly the euclidean relational precubical sets.
	The cofibrant replacement corresponds to the blowup.
\end{theorem}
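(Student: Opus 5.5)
The argument is mostly an assembly of the results already proved in this section; I would organize it around the cofibrant replacement. Fix a relational precubical set $P$ and factor $\varnothing\to P$ as $\varnothing\to\tilde P\xrightarrow{\beta_P}P$. By Proposition~\ref{prop:bup_eucl}, $\tilde P$ is euclidean. By Proposition~\ref{prop:eucl->cofib}, it is therefore cofibrant, so $\varnothing\to\tilde P$ is a cofibration. By Theorem~\ref{thm:bup}, $\beta_P$ is a trivial fibration, and in this model structure every morphism is a fibration, so this just says $\beta_P\in I^\boxslash$. Hence $\tilde P\to P$ is a cofibrant replacement of $P$. Corollary~\ref{cor:cofib} says the cofibrant replacement is unique up to unique isomorphism, which settles the second sentence of the statement: the cofibrant replacement is the blowup, together with $\beta_P$ as the replacement map.

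For the first sentence, one inclusion is Proposition~\ref{prop:eucl->cofib}. For the converse, let $P$ be cofibrant. Then $\id_P:P\to P$ is also a cofibrant replacement, since $\id_P$ is trivially in $I^\boxslash$. By uniqueness in Corollary~\ref{cor:cofib}, there is an isomorphism $\phi:P\cong\tilde P$ over $P$. More concretely, lift $\id_P$ against the trivial fibration $\beta_P$ to get $s:P\to\tilde P$ with $\beta_P s=\id_P$. Then $s\beta_P$ and $\id_{\tilde P}$ are both lifts of $\beta_P$ against $\beta_P$ from the cofibrant object $\tilde P$, so uniqueness of lifts forces $s\beta_P=\id_{\tilde P}$. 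It remains to check that being euclidean is invariant under isomorphism. This holds because an isomorphism $P\cong Q$ induces isomorphisms $\uparrow c\cong\,\uparrow\phi(c)$, and composing a chart with an isomorphism gives a surjective local embedding. So $P$ is euclidean.

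The main point needing care is the uniqueness argument, specifically that it is legitimate to apply the unique left lifting property to the object $\tilde P$ and not only to generating cofibrations. This is exactly the content of Corollary~\ref{cor:cofib}, which rests on Proposition~\ref{prop:stable}. So I would cite that corollary explicitly, applied to the cofibration $\varnothing\to\tilde P$ and the trivial fibration $\beta_P$. Everything else is routine bookkeeping.
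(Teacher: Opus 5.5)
Your proposal is correct and follows the paper's argument. The blowup is euclidean and hence cofibrant, and $\beta_P$ is a trivial fibration, so $\tilde P$ is the cofibrant replacement; a cofibrant object is then isomorphic to its replacement by the uniqueness in Corollary~\ref{cor:cofib}. You also spell out two steps the paper leaves implicit: the section-and-uniqueness argument showing $\beta_P$ is invertible when $P$ is cofibrant, and the fact that being euclidean is invariant under isomorphism.
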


\begin{corollary}
	Euclidean precubical sets are coreflective in relational precubical sets.
\end{corollary}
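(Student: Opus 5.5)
We must prove that euclidean precubical sets form a coreflective subcategory of $\ccat=\RelPsh(\square)$. The plan is to show that the blowup extends to a functor $P\mapsto\tilde{P}$ landing in euclidean precubical sets, and that $\beta_P:\tilde{P}\to P$ is a universal arrow from the inclusion. By Theorem~\ref{thm:cofib-euclidean}, euclidean objects are exactly the cofibrant ones, so it is enough to show that cofibrant objects are coreflective, with coreflector the cofibrant replacement.

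\textbf{Universal property.} Let $E$ be euclidean and $u:E\to P$ any morphism. By Theorem~\ref{thm:cofib-euclidean}, $\varnothing\to E$ is a cofibration. By Theorem~\ref{thm:bup}, $\beta_P$ is a trivial fibration, which here means a weak equivalence, since every morphism is a fibration. Corollary~\ref{cor:cofib} gives cofibrations the \emph{unique} left lifting property against such maps. Apply it to the square with $\varnothing\to E$ on the left, $\beta_P$ on the right, and $u$ along the bottom. This gives a unique $\bar u:E\to\tilde{P}$ with $\beta_P\circ\bar u=u$, which is exactly the couniversal property of $\beta_P$.

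\textbf{Functoriality.} Standard adjoint-functor reasoning then turns the assignment $P\mapsto\tilde{P}$ into a functor right adjoint to the full inclusion. For $v:P\to Q$, the map $\tilde v$ is the unique lift of $v\circ\beta_P$ through $\beta_Q$; this lift exists because $\tilde{P}$ is euclidean by Proposition~\ref{prop:bup_eucl}. Uniqueness of lifts gives preservation of identities and composition. Concretely, $\tilde v$ should be postcomposition $f\mapsto v\circ f$ on $\Hom(B_\epsilon,-)$, which one can also verify directly. The counit is $\beta$, and the inclusion is full by definition of the subcategory.

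\textbf{Main obstacle.} The key input is uniqueness of the lift. Existence alone would only give a weak coreflection. Uniqueness is supplied by Corollary~\ref{cor:cofib}, which rests on Proposition~\ref{prop:stable}, applied with $I$ stable by codiagonal. Given that, the rest is formal.
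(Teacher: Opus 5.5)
Your proof is correct and follows the paper's route: the paper gets coreflectivity from two ingredients. The first is the identification of euclidean objects with cofibrant ones, with the blowup as cofibrant replacement (Theorem~\ref{thm:cofib-euclidean}). The second is the unique lifting of cofibrations against trivial fibrations (Corollary~\ref{cor:cofib}), which makes $\beta_P$ the counit, exactly as in your argument. You simply spell out the universal-arrow and functoriality details that the paper treats as immediate.
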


\noindent Note that this Corollary was absolutely not obvious from Definition~\ref{def:euclidean_precubical_set}, but is immediate from Definition/Theorem~\ref{thm:cofib-euclidean}, which illustrates the power of this point of view.

\section{Homotopy theory of automata}\label{sec:automata}
As another application, we prove Kleene's theorem using homotopical arguments, by defining a suitable notion of ``manifold automata'', ensuring that the concatenation of two such automata is well-behaved with respect to the language.
For a fixed alphabet $\Sigma$, the category of $\Sigma$-labelled graphs is a presheaf topos $\Psh(\int_\gcat \Sigma)$, where $\int_\gcat \Sigma$ is the category of elements of the graph $\Sigma$ with one vertex and $|\Sigma|$ transitions.

\begin{definition}
	A relational $\Sigma$-automaton is a relational $\Sigma$-labelled graph $G$ together with two distinguished sets of vertices $I_G$ and $T_G$ (representing initial and accepting states respectively). A morphism of relational automata $\alpha:A\to B$ is a morphism of relational labelled graphs preserving initial and accepting states, i.e. such that $\alpha(I_A)\subseteq I_B$ and $\alpha(T_A)\subseteq T_B$. The category of relational $\Sigma$-automata is noted $\Aut$. Non-relational automata form a subcategory $\sAut$.
\end{definition}

Note that $\Aut$ is locally finitely presentable, and the finitely presentable automata are exactly the finite ones. 

\begin{remark}
	In the figures below, $\odot$ represents an initial state, $\otimes$ an accepting state, $\circledast$ a state which is both initial and accepting, $\circ$ a state which is neither initial nor accepting, and $\to$ an edge.
\end{remark}

\begin{definition}
	The set of positively generating cofibrations $I_+$ is given by the morphisms
	\[i_\odot:\varnothing \hookrightarrow \odot, \quad 
	  i_\to:\varnothing \hookrightarrow \;\to, \quad 
	  i_s:\odot\to\;\hookrightarrow \odot\hspace{-0.8ex}\to, \quad 
	  i_\otimes:\;\to\;\hookrightarrow \;\to\hspace{-0.8ex}\otimes, \quad i_\circledast:\varnothing\hookrightarrow \circledast \]
	as well as 
	\[\begin{tikzpicture}
		\begin{scope}[xshift=-33mm]
			\draw (0,0) node {$i_{m,n}:$};
		\end{scope}

		\begin{scope}[xshift=-20mm]
		\draw[<-] (0.5,-0.5) -- (0.045,-0.045);
		\draw[<-](0.5,0.5) -- (0.05,0.05);
		\draw[->] (-0.5,0.5) -- (-0.05,0.05);
		\draw[->] (-0.5,-0.5) -- (-0.045,-0.045);
		\draw (-0.4,0.1) node {$\vdots$};
		\draw (0.4,0.1) node {$\vdots$};
		\end{scope}

		\begin{scope}[xshift=-10mm]
			\draw (0,0) node {$\hookrightarrow$};
		\end{scope}

		\draw (0,0) node {$\circ$};
		\draw[<-] (0.5,-0.5) -- (0.045,-0.045);
		\draw[<-](0.5,0.5) -- (0.05,0.05);
		\draw[->] (-0.5,0.5) -- (-0.05,0.05);
		\draw[->] (-0.5,-0.5) -- (-0.045,-0.045);	
		\draw (-0.4,0.1) node {$\vdots$};
		\draw (0.4,0.1) node {$\vdots$};	
	\end{tikzpicture}\]
for any numbers $m,\,n>0 $ of entering and leaving edges respectively. 
\end{definition}
Intuitively, in order for an automaton to behave well with respect to concatenation, initial states should only have edges going out, and accepting states only edges going in. In order to preserve these conditions while allowing to add edges between internal states, one should only add such a state \emph{after} adding all its entering and leaving transitions. Note that we did not label the edges in the previous definition. We should take all the possibilities, so the morphisms given above are in fact families of morphisms. For example, the morphism $\varnothing \hookrightarrow \;\xrightarrow{a}$ for $a\in\Sigma$ will be noted $i_\to(a)$. If we do not specify a label, we mean any possible labeling. Now we can define $I:=I_+\cup\{\nabla_i\,|\,i\in I_+\}$ as before. By Corollary~\ref{cor:quillen}, this defines a model structure on $\Aut$. Before proving Kleene's theorem, we need some intermediate results, which are all proven abstractly.

\begin{proposition}\label{prop:conditions}
	Any cofibrant automata satisfies:
	\begin{enumerate}
		\item Every initial state has only edges going out of it.
		\item Every accepting state which is not initial has only edges entering it.
	\end{enumerate}
\end{proposition}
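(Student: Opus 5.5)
The two conditions are \emph{local} properties of an automaton, so I will show that they hold on the initial object, are preserved by pushouts along the generating cofibrations $I$, and are preserved by transfinite composition and retracts. Since every cofibrant automaton is a retract of an $I$-cell complex (small object argument in the locally finitely presentable category $\Aut$), this proves the proposition.

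An alternative that avoids cell complexes is to use the unique lifting property directly. For a cofibrant $A$, one would build a trivial fibration onto $A$ from an automaton in which the conditions hold, and then transport the conditions back along the section. The cell-complex argument is more elementary, so I will use it. Concretely, let $\mathcal{C}$ be the class of relational automata satisfying (1) and (2). Here ``edge $e$ going out of $v$'' means $v$ is a source of $e$, i.e.\ $e\to_{s} v$, and ``entering'' means $e\to_t v$.

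\textbf{Step 1: closure properties.} $\varnothing\in\mathcal{C}$ trivially. For a retract $A\to B\to A$ with $B\in\mathcal{C}$: if $v\in I_A$ has an edge $e$ with $e\to_t v$, then its image in $B$ is an initial state with an entering edge, which is a contradiction. For (2), a non-initial accepting state could map to an initial one in $B$; however, its image in $B$ maps back to $v$, and morphisms preserve initiality, so the image is non-initial and the same argument applies. For transfinite compositions, colimits in relational presheaves are computed as quotients of disjoint unions by the generated relation (\cite[Proposition 36]{chamoun2026realization}). Along a chain of monomorphisms, the colimit is just the union, so any offending pair $(e,v)$ already appears at some stage.

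\textbf{Step 2: pushouts along generators.} For each $i\in I_+$, I check that a pushout $A\to A'$ along $i:X\to Y$ with $A\in\mathcal{C}$ stays in $\mathcal{C}$. The new material is $Y\setminus X$, glued along $X\to A$.
\begin{itemize}
\item $i_\odot$ and $i_\circledast$ add isolated states, and $i_\to$ adds an edge with no endpoints.
\item $i_s$ adds a new initial vertex as a source of an existing edge. That edge gains only a source, so no initial vertex gains an entering edge. Its existing targets are unchanged.
\item $i_\otimes$ adds a new accepting, non-initial vertex as a target of an existing edge, so the new vertex only has entering edges.
\item $i_{m,n}$ adds a plain state $\circ$ (neither initial nor accepting), with no constraint.
\end{itemize}
In every case, the relations involving old vertices are unchanged, since pushouts along monomorphisms in relational presheaves add only the relations in $Y$.

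\textbf{Step 3: codiagonals.} For $\nabla_i:Y\sqcup_X Y\to Y$, the pushout $A'$ of $A$ along $\nabla_i$ is a quotient of $A$ that identifies two copies of $Y\setminus X$ already present in $A$. The identified states have the same type (initial and/or accepting), and the relation set is the image of the old one. An offending pair in $A'$ therefore comes from an offending pair in $A$ for vertex types preserved by the quotient.

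This step is the main obstacle. I must verify that the generated equivalence does not merge, say, a non-initial accepting vertex with an initial one. It does not, since $\nabla_i$ only identifies the two copies of a single new vertex of a fixed type. I also need the relations on the quotient to be exactly the images of the old ones, with no extra composites; in graphs, composites of $s$ or $t$ are impossible, so this holds. The final argument then concludes by Step 1 applied to the cell complex and its retract.
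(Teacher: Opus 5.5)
Your strategy is the paper's: present a cofibrant automaton as a retract of an $I$-cell complex, then check that the two conditions are stable under retracts, transfinite composition and pushouts along elements of $I$. Your retract argument and your treatment of the generators in $I_+$ are fine. But the step you single out as the main obstacle rests on a false claim.

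Morphisms in $\Aut$ preserve initial and accepting states but do not reflect them. So the two images in $A$ of the two copies of the new vertex need not have that vertex's type, nor the same type as each other. A pushout of $\nabla_{i_\odot}$ may identify an $\odot$ with a $\circledast$. A priori, the two images of the new accepting vertex of $i_\otimes$ could be an $\otimes$ and a $\circledast$, in which case the merged vertex would be initial with an entering edge. What excludes this is the hypothesis $A\in\mathcal{C}$, which your Step~3 never uses.

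\begin{itemize}
\item For $\nabla_{i_\otimes}$, both images are targets of the image of the edge. Hence they are non-initial by (1), and so have only entering edges by (2).
\item For $\nabla_{i_{m,n}}$, both images have an entering and an outgoing edge because $m,n>0$. By (1) and (2) they are therefore neither initial nor accepting.
\item For $\nabla_{i_\odot}$, $\nabla_{i_\circledast}$ and $\nabla_{i_s}$, both images are initial. The merged vertex is then initial with only outgoing edges, and whether it is accepting is irrelevant.
\item $\nabla_{i_\to}$ merges no vertices.
\end{itemize}

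With this case analysis in place of the ``fixed type'' argument, Step~3 is correct.

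Your transfinite-composition step also assumes a chain of monomorphisms. But pushouts along codiagonals are quotient maps (a pushout of $\nabla_{i_\odot}$ identifies two initial states), so the colimit of the cell complex is not a union of its stages. The conclusion survives because the colimit is filtered and $\Aut$ is locally finitely presentable, which is exactly how the paper argues. An offending configuration (an initial state with an entering edge, or an accepting state with an outgoing edge) is a morphism from a finite automaton into the colimit, and it factors through some stage. For condition (2), the state obtained at that stage is non-initial because the colimit injection preserves initial states. The same argument handles pushouts along \emph{sums} of generators, which you only treat one generator at a time: such a pushout is a transfinite composite of pushouts along single generators.
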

\begin{proof}
	Easy induction, knowing that any cofibration is a retract of a countable composition of pushouts of sums of elements of $I$ (by the small object argument~\cite[\S 2.1.2]{hovey2007model}). For retracts, notice that retract inclusions preserve \emph{and reflect} initial and accepting states. For transfinite composition, we just use that the category is locally finitely presentable, so for example if $0\to X_\infty:=\colim_{i\in\N}X_i$ is a cofibration with $X_i$ cofibrant and satisfying the properties for all $i\in \N$, if there is an initial state in $X_\infty$ which has an edge going in, we get a morphism $\to\hspace{-0.8ex}\odot\;\hookrightarrow X_\infty\;$
	which then factors through some $X_i$, which is impossible since the $X_i$ are supposed to satisfy the properties. Lastly, the only possibly problematic operation is the pushout, but we just see that for any automata satisfying the conditions, the pushout along any element of $I$ preserves these conditions. 
\end{proof}

\begin{proposition}\label{prop:paths}
	The path automata $\odot\hspace{-0.8ex}\to\hspace{-0.9ex}\circ\hspace{-0.9ex}\to \cdots\to\hspace{-0.9ex}\otimes$ are cofibrant, for any length $n\geq 0$.
\end{proposition}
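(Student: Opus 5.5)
The plan is to exhibit each path automaton as the codomain of a finite composite of pushouts of coproducts of positively generating cofibrations, starting from $\varnothing$. Cofibrations are stable under pushouts, coproducts and composition, so this makes $\varnothing\to$ (path) a cofibration.

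For length $n=0$ the path automaton is the single state $\circledast$, which is both initial and accepting. Here $i_\circledast:\varnothing\hookrightarrow\circledast$ is itself a generating cofibration, so there is nothing to do.

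For $n\geq 1$, label the path $v_0\xrightarrow{a_1}v_1\xrightarrow{a_2}\cdots\xrightarrow{a_n}v_n$. Here $v_0$ is initial, $v_n$ is accepting, and the $v_i$ with $0<i<n$ are neither. I would build it in four stages.
\begin{enumerate}
\item Take the coproduct $\bigsqcup_{j}i_\to(a_j)$. This produces $n$ relational edges $e_1,\dots,e_n$ with no source and no target.
\item Add an initial state with $i_\odot$. Then take the pushout along $i_s(a_1):\odot\sqcup\to\;\hookrightarrow\odot\hspace{-0.8ex}\to$, mapping the domain to $v_0$ and $e_1$. This makes $v_0$ the source of $e_1$.
\item Take the pushout along $i_\otimes(a_n)$, mapping the edge of its domain to $e_n$. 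This creates the accepting state $v_n$ together with the target relation $e_n\to v_n$.
\item For each $0<i<n$, take the pushout along $i_{1,1}$ with labels $(a_i,a_{i+1})$. Its domain is two unconnected edges, which I send to $e_i$ and $e_{i+1}$. This adds the internal state $v_i$ as the target of $e_i$ and the source of $e_{i+1}$. These $n-1$ pushouts can be done at once as a pushout of a coproduct.
\end{enumerate}
In each stage the map out of the domain of the generating cofibration is a genuine morphism of relational automata. This is because the domains only contain edges without endpoints, or an initial state that we send to an initial state. The cases $n=1$ and $n=2$ fit the same scheme, with stage 4 empty when $n=1$.

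The main point to check is that these pushouts compute what we expect. By the explicit description of colimits in relational presheaves~\cite[Proposition 36]{chamoun2026realization}, a pushout along a monomorphism $i$ is the disjoint union with the new cells of the codomain of $i$. The only relations added are those in the codomain of $i$, transported along the attaching map. The distinguished sets $I$ and $T$ are likewise unions of images.

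Transitivity of relations imposes nothing in graphs. The only non-identity morphisms of $\int_\gcat\Sigma$ are source and target, and these are not composable with each other, so there is no transitive closure to take. The stage-by-stage description therefore holds, and the final object is exactly the path automaton. Hence $\varnothing\to$ (path) is a finite composite of cofibrations, which proves the statement.
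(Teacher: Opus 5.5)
Your argument is correct. The paper states this proposition without proof, and your cell-by-cell construction is exactly the argument the choice of $I_+$ is designed to support: edges via $i_\to$, the initial state via $i_\odot$ and $i_s$, the accepting state via $i_\otimes$, internal states via $i_{1,1}$, and $i_\circledast$ for $n=0$, together with your check that each pushout in $\Aut$ adds exactly the expected cells and relations (note it uses only $i_{1,1}$ among the $i_{m,n}$, consistent with Remark~\ref{rem:simplify}).
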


\begin{remark}\label{rem:simplify}
	In fact, the properties suggested by Propositions~\ref{prop:conditions} and~\ref{prop:paths} are the only properties needed to prove Kleene's theorem, as we will see in the proof below. So we could have avoided adding $i_{m,n}$ for $m,\,n>1$, but adding them gives a more satisfying cofibrant replacement, in the sense that it leaves the ``internal structure'' (i.e. the states that are not initial nor accepting) of the automaton essentially untouched. However, if we just take $i_{1,1}$ in $I_+$, we get the \emph{blowup model structure} on $\Aut$, which is also interesting since the cofibrant replacement then essentially corresponds to ``unfolding one step'' of the loops of the automaton. Going into the details would take us too far, so we leave this for future work. 
\end{remark}

The right adjoint to the inclusion $\Psh(\int_\gcat \Sigma)\to\mathbf{RelPsh}(\int_\gcat \Sigma)$ can be lifted to a right adjoint $\deter$ to the inclusion $\sAut\to\Aut$ (the set of vertices is unchanged by the right adjoint, so we just take the same initial and accepting states). Clearly applying the right adjoint to a relational automata does not change the language, by the adjoint property and the fact that the paths automata are non-relational.

\begin{lemma}\label{lem:replacement}
	Let $A\in\sAut$. There is a non-relational $\Sigma$-automaton $\normal(A)$ satisfying the conditions of Proposition~\ref{prop:conditions}, recognizing the same language, and having a unique initial state.
\end{lemma}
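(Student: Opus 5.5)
The plan is to combine the cofibrant replacement of the model structure on $\Aut$ with the right adjoint $\deter$, and then normalize the initial states by hand. Take the cofibrant replacement $q:QA\to A$ given by Corollary~\ref{cor:quillen}, obtained by factoring $\varnothing\to A$ with the small object argument. The map $q$ is a trivial fibration, i.e.\ it lies in $I^\boxslash$, so by Lemma~\ref{lem:unique_lift} it has the unique right lifting property with respect to $I_+$.

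\textbf{Step 1: $QA$ recognizes the language of $A$.} A word $w$ is recognized by an automaton $X$ exactly when there is a morphism $P_w\to X$, where $P_w$ is the path automaton labelled by $w$. By Proposition~\ref{prop:paths}, $P_w$ is cofibrant, so $\varnothing\to P_w$ lifts against the trivial fibration $q$. Hence every path $P_w\to A$ factors through $QA$. Conversely, composing with $q$ sends paths in $QA$ to paths in $A$. So the two languages agree.

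\textbf{Step 2: pass to a non-relational automaton.} Since $QA$ is cofibrant, Proposition~\ref{prop:conditions} applies to it. Set $A':=\deter(QA)$. This has the same vertices and the same initial and accepting states as $QA$, and by the adjunction its paths are the paths of $QA$, because $P_w$ is non-relational. So $A'$ recognizes the same language. The conditions of Proposition~\ref{prop:conditions} are about edges entering or leaving states. I need to check that $\deter$ does not create an edge entering an initial state. The edges of $\deter(QA)$ should be the edges of $QA$ with exactly one source and one target, together with a choice of those, and these sources and targets are relations already present in $QA$. So the conditions survive. This is the step to verify against the explicit form of the right adjoint in~\cite{chamoun2026realization}.

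\textbf{Step 3: merge the initial states.} $A'$ may have several (possibly infinitely many) initial states. All of them only have outgoing edges, so I would identify them into a single state $\iota$.
\begin{itemize}
\item Initial states that are also accepting only have outgoing edges, so they can be merged with $\iota$ and $\iota$ is made accepting. This case is exactly when the empty word is recognized.
\item Otherwise $\iota$ is made non-accepting.
\end{itemize}
The merged state has no incoming edges, so it can never be revisited. Hence every accepting path from $\iota$ starts at one original initial state and never returns to an initial state, and the language is unchanged. The accepting states that are not initial are untouched, so condition 2 still holds.

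\textbf{Main obstacle.} The delicate point is conditions 1 and 2 in the presence of states that are both initial and accepting. Condition 2 only constrains accepting states that are not initial, so merging is harmless there. If this case causes trouble, the alternative is to perform the quotient directly in $\Aut$ as a pushout along $\coprod\odot\to\odot$ and repeat the argument of Proposition~\ref{prop:conditions}.
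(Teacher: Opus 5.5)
Your proof is correct and follows the paper's argument: cofibrant replacement (which preserves the language by Proposition~\ref{prop:paths}), then $\deter$, then identification of the initial states. The differences are minor: you apply $\deter$ before rather than after merging the initial states, so the merge happens among non-relational automata, and you check that $\deter$ preserves the conditions via its explicit form rather than the paper's counit/adjunction argument. For that explicit form, note that by Yoneda an $a$-edge of $\deter(X)$ is an $a$-edge of $X$ together with a chosen source and a chosen target; it is not restricted to edges of $X$ that already have exactly one of each. This gives exactly the containment of source/target relations you rely on.
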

\begin{proof}
	Take the cofibrant replacement $\tilde{A}$ of $A$, then identify all its initial states, and apply $\deter$. The cofibrant replacement preserves the language because of Proposition~\ref{prop:paths}, and the conditions ensure that identifying initial states preserves the language. Finally, $\deter$ preserves the conditions because they can be stated in terms of morphisms into the automaton. We give details for the second condition: take a relational automaton $B$ satisfying that every accepting state with edges going out is also initial. Now take an accepting state $v$ of $\deter(B)$, and suppose that it has an edge going out. We get a morphism $o:\otimes\hspace{-0.8ex}\to\hspace{-0.9ex}\circ\;\to \det(B)$. Composing with the counit of the adjunction, we get a morphism $o':\otimes\hspace{-0.8ex}\to\hspace{-0.9ex}\circ\;\to B$, so by hypothesis it can be factored through $i:\otimes\hspace{-0.8ex}\to\hspace{-0.9ex}\circ\;\to\;\circledast\hspace{-0.8ex}\to\hspace{-0.9ex}\circ$ as a morphism $o'':\circledast\hspace{-0.8ex}\to\hspace{-0.9ex}\circ\;\to B$. Now by adjunction, this lifts to a morphism $o''':\circledast\hspace{-0.8ex}\to\hspace{-0.9ex}\circ\;\to \det(B)$, and by naturality we have $o=o'''\circ i$, which means that $v$ is initial. 
\end{proof}

\noindent This can be expressed as a homotopy pushout (followed by $\deter$). Indeed, take an automaton $A$, with initial states $I_A$. Now consider the homotopy pushout of $\sqcup_{I_A}\odot\hookrightarrow A$ along $i_A:\sqcup_{I_A}\odot\to\odot$. Since $\odot$ is cofibrant, one way of computing it is the following (see~\cite[Corollary 2.3.28]{cisinski2019higher}). First take a cofibrant replacement $\tilde{A}$ of $A$. Since $\sqcup_{I_A}\odot$ is cofibrant, $i_A$ lifts to a morphism $i'_A:\sqcup_{I_A}\odot\to \tilde{A}$. Finally, take the ordinary pushout of $i'_A$ along $\sqcup_{I_A}\odot\to\odot$. This is exactly the procedure described above, which is very satisfying: the ``good'' way of identifying the initial states of an automaton is to identify them \emph{homotopically}.

\begin{lemma}\label{lem:finite}
	The cofibrant replacement of a finite automaton is finite.
\end{lemma}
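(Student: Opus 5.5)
We will not describe the cofibrant replacement $\tilde{A}$ of a finite automaton $A$ explicitly. Instead we use the same mechanism as for the blowup of precubical sets: the cofibrant replacement is obtained by the small object argument, and, since lifts are unique (Corollary~\ref{cor:cofib}), it can be built from pieces that are ``generating domains mapping into $A$''.

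The first step is to bound the size of the elements of $I$ that can contribute. A cell $i_{m,n}$ attached over a state $v$ of $A$ has codomain with $m+n$ edges. Any morphism from it to $A$ need not be injective on edges, so $m,n$ are a priori unbounded even for finite $A$. This is where we use the structure of the cofibrant replacement.

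We claim that $\tilde{A}$ admits the following explicit description, in the spirit of the definition of $\tilde{P}$:
\begin{itemize}
\item the edges of $\tilde{A}$ over an edge $e$ of $A$ are the edges of $A$ together with their labels, \emph{plus} the data of whether the source (resp. target) is initial-type, internal, or absent;
\item the internal states of $\tilde{A}$ over $v$ correspond to internal states of $A$;
\item the initial (resp. accepting, resp. both) states of $\tilde{A}$ correspond to those of $A$.
\end{itemize}
Concretely, each edge $e:u\to v$ of $A$ yields in $\tilde{A}$ one edge for each admissible choice of source among $\{$the copy of $u$ as initial state if $u\in I_A$, the internal copy of $u$, no source$\}$, and similarly for the target. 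Each state $v$ yields at most three copies: an initial one, an accepting one and an internal one (the case $\circledast$ is handled similarly). This object is clearly finite when $A$ is.

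The main work is to check that this candidate $\tilde{A}\to A$ has the unique right lifting property against $I_+$, hence is a trivial fibration by Lemma~\ref{lem:unique_lift}. For $i_{m,n}$, a map from its domain to $\tilde{A}$ gives $m+n$ edges of $\tilde{A}$ with no target (resp. no source) at the centre. A lift must send the centre to the internal copy of $v$ and re-target those edges to the corresponding edges with internal endpoint. The lift is unique because the edge copies are determined by the endpoint data. The same kind of check handles $i_s$, $i_\otimes$, $i_\odot$, $i_\to$ and $i_\circledast$.

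We must also check that this $\tilde{A}$ is cofibrant. We do this by building it as a pushout of sums of elements of $I$, proceeding in stages:
\begin{enumerate}
\item add the initial, accepting and $\circledast$ states and the source-less and target-less edges via $i_\odot$, $i_\to$ and $i_\circledast$;
\item attach sources and targets at initial and accepting states via $i_s$ and $i_\otimes$, using one pushout along a sum;
\item add the internal states via $i_{m,n}$, with $m,n$ equal to the numbers of in- and out-edges of the internal copy.
\end{enumerate}
The main obstacle is step 3 when a state of $A$ has zero entering or zero leaving edges: there is no $i_{0,n}$. We expect such states simply not to have internal copies in $\tilde{A}$; we will adjust the description accordingly, and adjust the lifting check in the same way.

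If the explicit verification becomes too heavy, we fall back on a more abstract argument. Each generating cofibration has finite domain and codomain, and a cell attached over $A$ in the small object argument is determined by finitely many data mapping to $A$. Uniqueness of lifts shows that only finitely many cells survive up to isomorphism. The only unbounded family is $i_{m,n}$, and it is cut down by the requirement that the edges attached at a single new internal state are distinct in $\tilde{A}$.
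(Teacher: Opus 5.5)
Your explicit model of $\tilde{A}$ is wrong, and the map $\tilde{A}\to A$ you describe is not a trivial fibration. Morphisms from the free-standing edge $\xrightarrow{a}$ into an automaton are exactly its $a$-labelled edges. Unique right lifting against $i_\to(a)$ therefore forces every trivial fibration to be a \emph{bijection on edges}. You instead put several edges over each edge of $A$, one per choice of endpoint data. That treats an edge as having at most one source and one target, whereas in a relational graph the single edge over $e$ simply carries several sources and targets. For instance, take one state $q$, initial and accepting, with an $a$-loop. Its replacement has a $\circledast$-state $q_0$, an accepting state $q_1$, an internal state $q_2$, and a single edge with sources $\{q_0,q_2\}$ and targets $\{q_1,q_2\}$. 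Your lifting check also misreads the lifting problem. In a square against $i_{m,n}$ the top map already fixes the images of the $m+n$ edges, so a lift only chooses the image of the centre and cannot ``re-target'' anything. If those edges are your copies with absent endpoint, no lift exists at all, and the same happens for $i_s$ and $i_\otimes$. Finally, accepting states of $\tilde{A}$ do not correspond to those of $A$. Uniqueness against $i_\otimes$ gives one accepting copy of $q$ per edge entering $q$, so two edges $p\to q$ into an accepting, non-initial $q$ give two accepting copies.

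Your fallback paragraph points in the right direction but asserts the crucial step (``only finitely many cells survive'') rather than proving it. Intermediate stages of the small object argument can contain infinitely many $i_{m,n}$-cells, since maps out of the domain of $i_{m,n}$ need not be injective, and distinctness of edges at a state is not what cuts them down. You need neither an explicit model nor cell bookkeeping; work directly with the trivial fibration $\beta:\tilde{A}\to A$, as the paper does.
\begin{enumerate}
\item By the bijection on edges, $\tilde{A}$ has finitely many edges; call this set $E$.
\item Every vertex of a cofibrant automaton is initial, or accepting and the target of an edge, or both a target and a source of edges. This follows by the same cell-by-cell induction as in Proposition~\ref{prop:conditions}: only $i_\odot$, $i_\circledast$, $i_\otimes$ and $i_{m,n}$ with $m,n>0$ create vertices, and these three properties are preserved by morphisms.
\item In the fibre over a vertex $v$ of $A$, uniqueness of lifts against $i_\odot$ leaves at most one initial vertex. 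Uniqueness against $i_\otimes$ leaves at most one accepting target of each edge. Uniqueness against $i_{1,1}$ leaves, for each pair $(\tilde e,\tilde f)$ of edges, at most one vertex that is a target of $\tilde e$ and a source of $\tilde f$.
\end{enumerate}
Hence each fibre has at most $1+|E|+|E|^2$ vertices, and $\tilde{A}$ is finite.
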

\begin{proof}
	Since trivial fibrations satisfy the unique lifting property with respect to $i_\to$, the cofibrant replacement preserves the number of edges. For vertices, first we notice that, by construction, a state in a cofibrant automaton is either: (1) initial; (2) accepting and the target of an edge; or (3) both the target and the source of some edges. Take a finite automaton $X$, and suppose that its cofibrant replacement has infinitely many vertices. For at least one vertex $v$ of $X$, looking at the fiber over $v$, at least one of these three families of vertices is infinite. But this contradicts the uniqueness of the lift with respect to $i_\odot$, $i_\otimes$ or $i_{m,n}$ (with $m$ the number of edges going in $v$, $n$ the number of edges going out, if both are $>0$).
\end{proof}

\noindent Finally, we prove the desired theorem.

\begin{theorem}
	Every regular language is recognized by some finite automaton.
\end{theorem}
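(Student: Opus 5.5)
We argue by structural induction on regular expressions. The claim we carry through the induction is stronger than the statement: every regular language $L$ is recognized by a finite non-relational automaton that has a unique initial state and satisfies the two conditions of Proposition~\ref{prop:conditions}. By Lemmas~\ref{lem:replacement} and~\ref{lem:finite}, this is no loss of generality. Indeed, starting from any finite automaton, the cofibrant replacement is finite (Lemma~\ref{lem:finite}). Identifying its initial states keeps it finite, and so does applying $\deter$, since $\deter$ keeps the vertex set and there are finitely many labels. We therefore always replace a finite automaton $A$ by the finite automaton $\normal(A)$ before performing an operation.

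The base cases are straightforward. The empty language is recognized by the automaton with a single initial state and no accepting state. The language $\{a\}$ is recognized by $\odot\hspace{-0.8ex}\xrightarrow{a}\hspace{-0.9ex}\otimes$, which is a path automaton and hence cofibrant (Proposition~\ref{prop:paths}), so it satisfies the conditions. For union, given normalized $A_1,A_2$, the disjoint union $A_1\sqcup A_2$ is finite and recognizes $L_1\cup L_2$, because a path automaton is connected and so maps into exactly one summand. We then apply $\normal$ to it.

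For concatenation, take normalized $A_1,A_2$, with $v_2$ the unique initial state of $A_2$. We glue every accepting state of $A_1$ to $v_2$: this is the pushout of $A_1\leftarrow\sqcup_{T_{A_1}}\circ\to A_2$, where the accepting states of $A_1$ become non-accepting and the initial states of $A_1$ remain initial. Suppose some accepting state of $A_1$ is also initial (so $\epsilon\in L_1$). In that case we also keep $v_2$ initial. We make the glued vertex accepting exactly when $v_2$ is accepting. Checking the language is the central step. In $A_1$, accepting non-initial states have only incoming edges, and in $A_2$, $v_2$ has only outgoing edges. Hence any path through the glued vertex enters it only from the $A_1$ side and leaves it only on the $A_2$ side, so it crosses from $A_1$ to $A_2$ exactly once and never returns. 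A recognized path therefore decomposes uniquely as an accepted path of $A_1$ followed by an accepted path of $A_2$. Initial-and-accepting states of $A_1$ have no incoming edges, so they can only occur at the very start of a path, which is the case $\epsilon\in L_1$ handled by keeping $v_2$ initial. The converse inclusion is immediate.

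For the Kleene star, take $A$ normalized with initial state $v$. We glue all accepting states of $A$ to $v$ and declare $v$ both initial and accepting. A path returns to $v$ only by arriving at a former accepting state, so the recognized paths are exactly the concatenations of accepted paths of $A$, giving $L^*$ (the empty path contributes $\epsilon$). The main obstacle is this language bookkeeping: we must verify that the gluings create no spurious paths. Here the conditions of Proposition~\ref{prop:conditions}, together with the uniqueness of the initial state from Lemma~\ref{lem:replacement}, are exactly what forbid paths that oscillate across the seam. In particular, an accepting state of $A$ that is also initial can only be $v$ itself, since $v$ is the unique initial state, so no extra edges into $v$ appear beyond those from former accepting states. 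Each construction is finite, and by renormalizing with $\normal$ after every step the inductive invariant is restored, which concludes the induction.
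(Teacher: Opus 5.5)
Your overall plan matches the paper's: maintain the invariant (finite, non-relational, unique initial state, conditions of Proposition~\ref{prop:conditions}) by renormalizing, then handle each constructor by an explicit gluing. Your base cases, union and Kleene star are fine. The gap is in the concatenation step when $\epsilon\in L_1$. You glue \emph{all} of $T_{A_1}$ to $v_2$. So when the unique initial state $v_1$ of $A_1$ is accepting, $v_1$, the non-initial accepting states $x_j$ and $v_2$ all become one vertex $w$. This $w$ receives the incoming edges of the $x_j$, and it also inherits the outgoing edges of $v_1$ back into the $A_1$ part.

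Two of your claims then fail after the gluing: that a path through the seam ``leaves it only on the $A_2$ side'', and that the initial-and-accepting state of $A_1$ ``can only occur at the very start of a path''. A path can go from $w$ around $A_1$ and back to $w$ as often as it likes. So your automaton recognizes $(L_1\setminus\{\epsilon\})^*L_2=L_1^*L_2$, not $L_1L_2$. Concretely, $A_1=\circledast\xrightarrow{a}\otimes$ satisfies your invariant and recognizes $\{\epsilon,a\}$, and $A_2=\odot\xrightarrow{b}\otimes$ recognizes $\{b\}$. Your gluing yields an initial vertex with an $a$-loop and a $b$-edge to an accepting state, which recognizes $a^*b\ni aab$.

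The fix is what the paper does. Glue only the \emph{non-initial} accepting states $x_j$ of $A_1$ to $v_2$, and forget that $v_1$ is accepting. Then the seam vertex has only incoming edges from the $A_1$ part and outgoing edges into the $A_2$ part. Your cross-exactly-once argument goes through, and the result recognizes $(L_1\setminus\{\epsilon\})L_2$. When $\epsilon\in L_1$, use $L_1L_2=((L_1\setminus\{\epsilon\})L_2)\cup L_2$. The paper takes the coproduct with $A_2$. Alternatively, you can declare the new seam vertex initial, which is now harmless since paths leaving it stay in the $A_2$ part, and then renormalize.

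In your Kleene star step, identifying $v$ with all accepting states is exactly what is wanted, so that step is correct. Declaring $v$ accepting explicitly also covers the case $L=\varnothing$, where the paper's bare quotient identifying $i$ with the $x_j$ would not accept $\epsilon$.
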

\begin{proof}
	The empty language and the singleton languages clearly do. The union of languages is just given by the coproduct of the corresponding automata. We still have concatenation and Kleene star.
	Take two finite automata $A$ and $B$ recognizing languages $L_A$ and $L_B$ respectively. $\normal(A)$ and $\normal(B)$ are finite by Lemma~\ref{lem:finite}. Let $i$ be the initial state and $(x_j)_{j\in J}$ be the non-initial accepting states of $\normal(A)$, $v$ the initial state of $\normal(B)$. Let $A'$ be the automaton $\normal(A)$ where we forget that the $x_j$ and eventually $i$ are accepting, $B'$ be the automaton $\normal(B)$ where we forget that $v$ is initial (it can still be accepting). The conditions of Proposition~\ref{prop:conditions} ensure that the automaton $A'\sqcup_{v=x_j}B'$ recognizes $(L_A\setminus\{\epsilon\})*L_B$, and the automaton $\normal(A)_{/i =x_j}$ recognizes $(L_A)^*$. Note that if $\epsilon\in L_A$ then $L_A*L_B=((L_A\setminus\{\epsilon\})*L_B)\cup L_B$, so if $i$ is accepting, the coproduct of $A'\sqcup_{v=x_j}B'$ with $B$ is an automaton recognizing $L_A*L_B$.
\end{proof}

\section{Conclusion}
This paper shows how to build categories of combinatorial manifolds, i.e.~combinatorial objects which locally look like some fixed (set of) model(s). We argued that this can easily be obtained by combining relational presheaves and unique factorization systems. In addition to the abstract definition in terms of cofibrant objects, our method automatically implies the existence of a coreflector 
from combinatorial objects to combinatorial manifolds. With this in mind, we showed how to define the most obvious example of such combinatorial manifolds: precubical sets which are locally euclidean. Indeed, precubical sets are typical combinatorial objects which can be used to model topological spaces, and the euclidean condition is probably the most common local condition that one can impose on ``manifolds'' (except that here, the euclidean structure is the order structure, not the differential one). 

However, this example is not a mere illustration, and can also have concrete applications in concurrent programming. Indeed, our blowup construction produces a euclidean precubical set which, under some conditions which need to be explicited, faithfully represents the parallel program that was represented by the original precubical set. This kind of operations is known to be interesting, and our construction generalizes the one of~\cite{haucourt2025non}, which was only defined on graphs, to all precubical sets. 

On the other hand, we also showed that the concept of ``combinatorial manifold'' as defined here can be distant from any topological interpretation, and be useful in contexts like automata theory. Although there is no novelty in reproving Kleene's theorem, the fact that our proof is completely abstract suggests that this technique might generalize well to other flavours of automata. For example, it is interesting to compare our \S\ref{sec:automata} to~\cite[\S6-7]{bazille2026variants}, where the authors prove a Kleene theorem for relational higher dimensional automata. The difference is that we never have to ``guess'' how to transform an automaton into an equivalent one which behaves well with respect to concatenation; we just state what that means, locally, and then impose these conditions by a categorical procedure. It is essentially philosophical, but we believe that our method can give simple proofs in settings where guessing the correct answer would be challenging. 

Finally, we did obtain a model structure on the category of (relational) automata, where homotopy colimits are interesting, since they interact well with the language. However, the weak equivalences in this model structure are very rigid, since two weakly equivalent automata need to have the same transitions. On the other hand, the conditions on the states seem more reasonable, and can be compared to the notion of ``(co-)covering'' in~\cite[\S3.3.1]{sakarovitch2009elements}. This suggests that a localization as in~\cite{balchin2019bousfield} might give a model structure on automata which is not only interesting with regard to concatenation, but also with regard to the internal structure of the automata. This is currently under investigation.

\bibliographystyle{eptcsalpha}
\bibliography{b}

\appendix
\section{Omitted proofs}\label{sec:proofs}
\subsection{Proof of Lemma~\ref{lem:unique_lift}}
Suppose that $f$ has the unique right lifting property with respect to $i$. 
We just need to show that the right square as in the following commutative diagram has a filler:
\begin{center}
	\begin{tikzcd}
	c \arrow[r] \arrow[d, "i"'] & d\sqcup_c d\arrow[d, "\nabla_i"]\arrow[r, "t"] & \arrow[d, "f"]\\
	d \arrow[r, "\id_d"']\arrow[ur, "\iota_\epsilon"] & d \arrow[r, "w"'] \arrow[ur, "h"', dotted]& b
	\end{tikzcd}
\end{center}
The outer rectangle has a filler $h$, and we have that $f\circ h= w$. 
We still need to show that $h\circ \nabla_i= t$,
i.e. that $h\circ \nabla_i\circ\iota_\epsilon=t\circ\iota_\epsilon$ for $\epsilon=0,\,1$.
But this immediately follows from the fact that the filler of the outer rectangle is unique.
  
Conversely, consider the solid square on the left.
\begin{center}
	\begin{tikzcd}
	c \arrow[r, "t"] \arrow[d, "i"'] & a\arrow[d, "f"]\\
	d  \arrow[r, "w"'] \arrow[ur, "h_\epsilon"', dotted]& b
	\end{tikzcd}
   \qquad\qquad\qquad
   \begin{tikzcd}
	d\sqcup_c d\arrow[d, "\nabla_i"']\arrow[r, "({h_1}_\prime\, h_2)"] & a\arrow[d, "f"]\\
	d \arrow[r, "w"'] \arrow[ur, ""', dotted]& b
	\end{tikzcd}
\end{center}
with two fillers $h_1$ and $h_2$. 
The fact that $h_1=h_2$ follows from the fact that the square on the right has a filler.

\subsection{Proof of Proposition~\ref{prop:stable}}
Essentially by the small object argument~\cite[\S 2.1.2]{hovey2007model},
every element of $^\boxslash(I^\boxslash)$ is a retract of an $I$-cell extension,
which is a (countable) composite of pushouts of sums of elements of $I$.
So we prove by induction that $i\in\,^\boxslash(I^\boxslash)$ implies $\nabla_i\in\,^\boxslash(I^\boxslash)$. 
\begin{enumerate}
	\item (elements of $I$) By hypothesis, for every $i\in I$, $\nabla_i\in\,^\boxslash(I^\boxslash)$.
	\item (sums) Take a family $(i_\alpha)_\alpha$ of morphisms such that $\nabla_{i_\alpha}\in\,^\boxslash(I^\boxslash)$ for every $\alpha$.
	Since colimits commute with colimits, it is formal to show that \[\sqcup_\alpha \nabla_{i_\alpha}\cong \nabla_{\sqcup_\alpha i_\alpha}\]
	so $\nabla_{\sqcup_\alpha i_\alpha}\in \,^\boxslash(I^\boxslash)$.
	\item (pushouts) Consider the following pushout square on the left:
	\begin{center}
		\begin{tikzcd}
			a_1 \arrow[r,"f"]\arrow[d, "i_1"'] & a_2\arrow[d, "i_2"]\\
			b_1 \arrow[r, "g"'] & b_2
		\end{tikzcd}
     \qquad\qquad\qquad
     \begin{tikzcd}
			b_1\sqcup_{a_1} b_1 \arrow[r,"(g_\prime\,g)"]\arrow[d, "\nabla_{i_1}"'] & b_2\sqcup_{a_2} b_2 \arrow[d, "\nabla_{i_2}"]\\
			b_1 \arrow[r, "g"'] & b_2
		\end{tikzcd}
	\end{center}
	and suppose that $\nabla_{i_1}\in\,^\boxslash(I^\boxslash)$.
	We show that the square on the right is cocartesian, which proves that $\nabla_{i_2}\in\,^\boxslash(I^\boxslash)$. Take a cone:
	\begin{center}
		\begin{tikzcd}
			b_1\sqcup_{a_1} b_1 \arrow[r,"(g_\prime\,g)"]\arrow[d, "\nabla_{i_1}"'] & b_2\sqcup_{a_2} b_2 \arrow[d, "\nabla_{i_2}"] \arrow[ddr, bend left, "u"] \\
			b_1 \arrow[r, "g"'] \arrow[rrd, bend right, "v"'] & b_2 \\
			& & X
		\end{tikzcd}
	\end{center}
	This is equivalent to the data of $v$ and two morphisms $u_1,\,u_2$ such that 
	$u_1\circ i_2=u_2\circ i_2$ and $v=u_1\circ g=u_2\circ g$.
	But then $u_1=u_2$ because $b_2$ is the pushout of $i_1$ and $f$,
	so this is just the data of a morphism $b_2\xrightarrow{}X$,
	which concludes.
	\item (finite compositions) Take $i_1:a\xrightarrow{}b$ and $i_2:b\xrightarrow{}c$
	such that $\nabla_{i_1},\,\nabla_{i_2}\in\,^\boxslash(I^\boxslash)$. 
	It is formal to show that the following square is cocartesian:
	\begin{center}
		\begin{tikzcd}
			b\sqcup_a b \arrow[r, "({i_2}_\prime\,i_2)"] \arrow[d,"\nabla_{i_1}"'] & c\sqcup_a c\arrow[d, "({\id_c}_\prime\,\id_c)"]\\
			b \arrow[r] & c\sqcup_b c
		\end{tikzcd}
	\end{center}
	and that $\nabla_{i_2\circ i_1}$ is the composite 
	\[c\sqcup_a c \xrightarrow{} c\sqcup_b c \xrightarrow{\nabla_{i_2}} c\]
	\item (countable composition) Follows from the previous case and from commutation of colimits with colimits.
	\item (retracts) The codiagonal of a retract is a retract of the codiagonal.
	\end{enumerate}

\end{document}